\setlist{nosep}
\setlist[itemize]{leftmargin=*}
\setlist[enumerate]{leftmargin=*,align=left,nolistsep}
\newtheorem{theorem}{Theorem}[section]
\newtheorem{claim}[theorem]{Claim}
\newtheorem{corollary}[theorem]{Corollary}
\newtheorem{definition}[theorem]{Definition}
\newtheorem{question}[theorem]{Question}
\newtheorem{lemma}[theorem]{Lemma}
\newtheorem{proposition}[theorem]{Proposition}
\theoremstyle{definition}
\newtheorem{example}[theorem]{Example}
\newtheorem{remark}[theorem]{Remark}
\newcommand{\lan}{\langle }
\newcommand{\ran}{\rangle}
\newcommand{\MS}{\mathcal{S}}
\newcommand{\MM}{\mathcal{M}}
\newcommand{\MO}{\mathcal{O}}
\newcommand{\Chow}{\mathrm{Chow}}
\newcommand{\Tr}{\mathrm{Tr}}
\newcommand{\we}{\wedge}
\newcommand{\pa}{\partial}
\newcommand{\vp}{\varphi}
\newcommand{\MA}{\mathcal{A}}
\newcommand{\Hom}{\mathrm{Hom}}
\newcommand{\mfg}{\mathfrak{g}}
\newcommand{\mfR}{\mathfrak{R}}
\newcommand{\MV}{\mathcal{V}}
\newcommand{\MMH}{\MM_{\mathrm{Higgs}}}
\newcommand{\mbP}{\mathbb{P}}
\newcommand{\Dol}{\mathrm{Dol}}
\newcommand{\MSE}{\mathscr{E}}
\newcommand{\MSF}{\mathscr{F}}
\newcommand{\MSL}{\mathscr{L}}
\newcommand{\MSO}{\mathscr{O}}
\newcommand{\MST}{\mathscr{T}}
\newcommand{\rank}{\mathrm{rank}}
\newcommand{\Aut}{\mathrm{Aut}}
\newcommand{\id}{\mathrm{id}}
\newcommand{\MMD}{\MM_{\mathrm{Dol}}}
\newcommand{\tX}{\tilde{X}}
\newcommand{\Sym}{\mathrm{Sym}}
\newcommand{\tx}{\tilde{x}}
\newcommand{\mfh}{\mathfrak{h}}
\newcommand{\mbR}{\mathbb{R}}
\newcommand{\mbZ}{\mathbb{Z}}
\newcommand{\mbfB}{\mathbf{B}}
\newcommand{\mbfs}{\mathbf{s}}
\newcommand{\mbC}{\mathbb{C}}
\newcommand{\msE}{\mathscr{E}}
\newcommand{\Higgs}{\mathrm{Higgs}}
\newcommand{\GL}{\mathrm{GL}}
\newcommand{\Tot}{\mathrm{Tot}}
\newcommand{\codim}{\mathrm{codim}}
\newcommand{\rs}{\mathrm{s}}
\newcommand\sh{\mathsf{h}}
\newcommand\ssd{\mathsf{sd}}
\newcommand{\sing}{\mathrm{sing}}
\newcommand{\reg}{\mathrm{reg}}
\newcommand{\NA}{\mathrm{NA}}
\newcommand{\mfgc}{\mfg_{\mbC}}
\newcommand{\mflc}{\mathfrak{l}_{\mbC}}
\newcommand{\mfp}{\mathfrak{p}}
\newcommand{\mfq}{\mathfrak{q}}
\newcommand{\mfl}{\mathfrak{l}}
\DeclareMathAlphabet{\mathcalligra}{T1}{calligra}{m}{n}
\declaretheoremstyle[
headfont=\color{black}\normalfont\bfseries,
bodyfont=\color{black}\normalfont\itshape,
]{colored}
\begin{document}
	\title[The Spectral base and quotients of bounded symmetric domains]{The Spectral base and quotients of bounded symmetric domains by cocompact lattices}

   \author{Siqi He} 
	\address{Siqi He, Institute of Mathematics, Academy of Mathematics and Systems Science, Chinese Academy of Sciences, Beijing, 100190, China}
	\email{\href{sqhe@amss.ac.cn}{sqhe@amss.ac.cn}}	
     \author{Jie Liu} %
\address{Jie Liu, Institute of Mathematics, Academy of Mathematics and Systems Science, Chinese Academy of Sciences, Beijing, 100190, China}
\email{\href{jliu@amss.ac.cn}{jliu@amss.ac.cn}}

	\author{Ngaiming Mok} 
	\address{Ngaiming Mok, Department of Mathematics, The University of Hong Kong, Pokfulam Road, Hong Kong}
	\email{\href{nmok@hku.hk}{nmok@hku.hk}}	
	\maketitle
	\begin{abstract}
In this article, we explore Higgs bundles on a projective manifold $X$, focusing on their spectral bases —-- a concept introduced by T.~Chen and B.~Ng\^{o}. The spectral base is a specific closed subscheme within the space of symmetric differentials. We observe that if the spectral base vanishes, then any reductive representation $\rho: \pi_1(X) \to \text{GL}_r(\mathbb{C})$ is both rigid and integral. Additionally, we prove that for $X=\Omega/\Gamma$, a quotient of a bounded symmetric domain $\Omega$ of rank at least $2$ by a torsion-free cocompact irreducible lattice $\Gamma$, the spectral base indeed vanishes, which generalizes a result of B.~Klingler.
	\end{abstract}

\section{Introduction}
\label{sec_introduction}
Let $X$ be a projective manifold and denote by $\Omega_X^1$ its cotangent bundle. A symmetric differential is a holomorphic section of $\Sym^i\Omega_X^1$ for some positive integer $i$. According to Hodge theory, the cotangent bundle $\Omega_X^1$ admits non-zero holomorphic sections if and only if the abelianization of $\pi_1(X)$ is infinite. However, the relationship between symmetric differentials and fundamental groups remains mysterious, and this question has been introduced by F.~Severi \cite{severi1959géométrie} and H.~Esnault in various contexts.

\begin{question}[\protect{Severi, Esnault}]
\label{question_main}
    What is the relationship between the symmetric differentials and the fundamental group? 
\end{question}

The question mentioned above has been approached from different perspectives in \cite{Klingler2013} and \cite{brunebarbeklingler2013symmetric}, where the main tool is the non-abelian Hodge correspondence, which identifies the moduli space of Higgs bundles and character varieties. In this article, we
attempt to gain insight into this question by adopting a novel approach proposed
by T.~Chen and B.~Ngô in \cite{ChenNgo2020}.

The moduli space of Higgs bundles has been extensively utilized to investigate the topology and geometry of character varieties \cite{Simpson1991, Simpson1992, simpson1994moduli, simpson1996hodge}. For a fixed rank $r$, consider the moduli stack $\MMH^{\textup{stack} {\color{black}, r}}$ of Higgs bundles {\color{black} of} rank $r$. The Hitchin morphism, introduced by Hitchin \cite{hitchin1987self, hitchin1992lie}, plays a crucial role in understanding the moduli space. By taking invariant polynomials, the Hitchin morphism is a map
    \[
    \sh_X: \MMH^{\textup{stack}{\color{black}, r}} \rightarrow \mathcal{A}_X^r := \bigoplus_{i=1}^r H^0(X,\Sym^i \Omega_X^1).
    \]
    Moreover, the affine space $\MA_X^r$ is called the \emph{Hitchin base}. 

When $\dim\;X\geq 2$, a Higgs bundle $(\msE,\vp)$ must satisfy an extra integrability condition $\varphi \wedge \varphi=0$, which makes the Hitchin morphism not surjective in general. T.~Chen and B.~Ngô introduced in \cite{ChenNgo2020} the \emph{spectral base} $\MS_X^r$ as a closed subscheme of the Hitchin base $\mathcal{A}_X^r$. They demonstrated that the integrability condition leads to the Hitchin morphism $\sh_X$ factoring through the natural inclusion $\iota_X: \MS_X^r \rightarrow \mathcal{A}_X^r$.

We denote by $\mfR^{\GL_r(\mbC)}$ the $\GL_r(\mbC)$ character variety.  Recall that a representation $[\rho]\in \mfR^{\GL_r(\mbC)}$ is called \emph{rigid} if it is an isolated point, and $\mfR^{\GL_r(\mbC)}$ is termed \emph{rigid} if every representation $[\rho]\in \mfR^{\GL_r(\mbC)}$ is isolated. In particular, since the character variety is a complex affine variety, the character variety is rigid if and only if it is zero-dimensional. Rigid representations are of particular interest. It has been shown in \cite{Simpson1992} that rigid representations are $\mbC$-variations of Hodge structures ($\mbC$-VHS for short). Furthermore, it is conjectured that rigid representations (local systems) originate from geometric sources \cite{Simpson1991}.

A representation $\rho:\pi_1(X)\rightarrow \GL_r(\mbC)$ is called \emph{integral} if it is conjugate to a representation $\pi_1(X)\rightarrow \GL_r(\MO_K)$, where $K$ is a number field and $\MO_K$ is the ring of integers of $K$. The motivation to consider rigid representations comes from \emph{Simpson's integrality conjecture}, which predicts that any rigid representation is integral. This conjecture has been confirmed by H.~Esnault and M.~Groechenig for cohomological rigid local systems (see also Remark \ref{r.EsnaultTalk}).

Using the spectral base, we obtain the following result, which generalizes \cite{Simpson1992, arapura2002higgs, Klingler2013}, see also \cite{zuo1996kodaira, katzarkov1997shafarevich,  Eyssidieux2004, cadorel2022hyperbolicity} for various generalizations.

\begin{theorem}\label{thm_main_theorem_general_variety}
    Let $X$ be a projective manifold such that $\MS^r_X=0$ for some $r\geq 1$. Then the following statements hold:
    \begin{enumerate}
        \item Any reductive representation $\rho:\pi_1(X)\to \GL_r(\mbC)$ is rigid and integral. Moreover, it is a complex direct factor of a $\mbZ$-variation of Hodge structures.
        \item Let $F$ be a non-Archimedean local field. Then any reductive representation $\rho:\pi_1(X)\to \GL_r(F)$ has bounded image.
    \end{enumerate}
\end{theorem}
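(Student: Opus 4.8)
The plan is to convert the hypothesis $\MS^r_X=0$ into a statement about Higgs fields and then treat the two parts through non-abelian Hodge theory and through harmonic maps to buildings, respectively. First I would observe that, since the Hitchin morphism $\sh_X$ factors through the inclusion $\iota_X\colon \MS^r_X\to \MA^r_X$ and $\MS^r_X=\{0\}$, the Hitchin morphism is identically zero. Consequently every rank-$r$ polystable Higgs bundle $(\msE,\vp)$ with vanishing Chern classes satisfies $\sh_X(\msE,\vp)=0$; that is, all the characteristic coefficients of the $\Omega^1_X$-valued endomorphism $\vp$ vanish, so $\vp$ is fibrewise nilpotent. This single observation is what both parts exploit.

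For part (1), given a reductive $\rho\colon\pi_1(X)\to\GL_r(\mbC)$, Simpson's correspondence attaches a polystable Higgs bundle $(\msE,\vp)$ with trivial Chern classes, and by the above $\vp$ is nilpotent. To obtain rigidity I would use the $\mbC^*$-action $t\cdot(\msE,\vp)=(\msE,t\vp)$: since $\sh_X\equiv 0$ the entire orbit remains in the global nilpotent cone $\sh_X^{-1}(0)=\MMH$, and the limit $t\to 0$ yields a $\mbC$-variation of Hodge structures. Rigidity itself I would deduce from the fact that the character variety $\mfR^{\GL_r(\mbC)}$ is affine while, under non-abelian Hodge theory, it is homeomorphic to $\MMH=\sh_X^{-1}(0)$; compactness of this fiber (the global nilpotent cone) then forces the affine variety $\mfR^{\GL_r(\mbC)}$ to be zero-dimensional, so every reductive $\rho$ is isolated. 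That $[\rho]$ underlies a $\mbC$-VHS is Simpson's theorem, and the remaining assertions follow by combining rigidity — which forces $[\rho]$ to be defined over a number field $K$, as the zero-dimensional points of a $\mbQ$-scheme are algebraic — with the non-Archimedean boundedness of part (2) at every finite place, upgrading ``defined over $K$'' to ``defined over $\MO_K$'' and hence to a complex direct factor of a $\mbZ$-VHS.

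For part (2), suppose $\rho\colon\pi_1(X)\to\GL_r(F)$ is reductive with unbounded image, $F$ non-Archimedean. By Gromov--Schoen and Korevaar--Schoen there exists a $\rho$-equivariant pluriharmonic map from the universal cover $\tilde{X}$ to the Bruhat--Tits building of $\GL_r(F)$, and this map is non-constant precisely because the image is unbounded. Its complexified differential produces a well-defined tuple of holomorphic symmetric differentials, the spectral datum of the harmonic map; the point of the Chen--Ng\^{o} formalism is that the integrability constraint $\vp\we\vp=0$ is exactly the defining condition of the spectral base, so this spectral datum determines a \emph{nonzero} point of $\MS^r_X$. Since $\MS^r_X=0$, this is a contradiction, and the image must be bounded.

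I expect the technical heart to be part (2): constructing the equivariant pluriharmonic map to the building in the higher-dimensional Kähler setting, establishing enough regularity and pluriharmonicity that its differential yields honest holomorphic symmetric differentials, and — most importantly — verifying that the resulting spectral datum genuinely lands in Chen--Ng\^{o}'s spectral base $\MS^r_X$ rather than merely in the larger Hitchin base $\MA^r_X$, i.e. that it satisfies the integrability relations. A secondary but nontrivial point supports part (1): in dimension $\geq 2$ the Hitchin morphism need not be proper onto $\MA^r_X$, so the compactness of the fiber $\sh_X^{-1}(0)$ used to force finiteness of $\mfR^{\GL_r(\mbC)}$ must be argued directly, for instance via the $\mbC^*$-action and the semiprojectivity of $\MMH$.
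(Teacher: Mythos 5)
Your proposal is correct and follows essentially the same route as the paper: part (2) via Gromov--Schoen equivariant harmonic maps into the Bruhat--Tits building, whose spectral data land in $\MS^r_X$ by the codimension-two extension argument; part (1) by deducing rigidity from compactness of the affine character variety (compact affine $\Rightarrow$ zero-dimensional), then integrality by putting $\rho$ into $\GL_r(K)$ for a number field $K$ and applying part (2) at every finite place, and finally Simpson's theorem for the $\mbZ$-VHS statement. Your closing worry about part (1) is unfounded: properness of $\sh_X$ restricted to the Dolbeault moduli space $\MMD^r$ holds in every dimension (Simpson; this is Theorem \ref{thm_Hitchinmap_proper} in the paper) --- it is only surjectivity that fails when $\dim X\geq 2$ --- so the compactness of $\MMD^r\subset\ssd_X^{-1}(0)$ needs no separate argument via the $\mbC^*$-action or semiprojectivity.
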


Therefore, we will be particularly interested in the varieties with vanishing spectral bases. In view of Margulis' superrigidity \cite{Margulis1991}, examples of varieties with rigid character varieties are Hermitian locally symmetric spaces of higher rank. Thus we would like to understand Simpson's integrality conjecture from the perspective of Higgs bundles and spectral varieties in the case of Hermitian locally symmetric spaces with rank $\geq 2$, following the approach of Klingler \cite{Klingler2013}. In particular, we obtain the following:

\begin{theorem}\label{t.SpBaseArithVar}
Let $\Omega=\Omega_1\times \cdots \times \Omega_m$ be a bounded symmetric domain of rank $\geq 2$ together with its decomposition into irreducible factors. Let $\Gamma\subset {\rm Aut}(\Omega)$ be a torsion-free irreducible cocompact lattice, and write $X\coloneqq \Omega/\Gamma$. Then $\MS^r_X=0$ for any $r\geq 1$.
\end{theorem}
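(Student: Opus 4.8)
The plan is to combine the Chen--Ngô description of the spectral base with the representation-theoretic rigidity of higher rank lattices, the vanishing engine being the absence of holomorphic one-forms on $X$ and on all of its finite covers.

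First I would unwind what $\MS^r_X=0$ means. By the construction of Chen and Ngô, the integrability constraint $\varphi\wedge\varphi=0$ forces the components of a Higgs field in distinct cotangent directions to commute, so that a point $a=(a_1,\dots,a_r)\in\MA^r_X$ lies in $\MS^r_X$ precisely when the characteristic polynomial $t^r+a_1t^{r-1}+\cdots+a_r$, read over $\Tot(\Omega^1_X)$, splits into linear factors whose roots are sections of $\Omega^1_X$. Equivalently, the spectral subvariety $X_a\subset\Tot(\Omega^1_X)$ is, set-theoretically, the union of the graphs of $r$ \emph{eigen-one-forms} $\lambda_1,\dots,\lambda_r$, holomorphic sections of $\Omega^1_X$ that are single-valued on the normalized spectral cover $\nu\colon\hat X_a\to X$. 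Thus the theorem is equivalent to the statement that every integrable Higgs field of rank $r$ is nilpotent, and it suffices to show that all eigen-one-forms $\lambda_j$ vanish, since then $a_i=e_i(\lambda_1,\dots,\lambda_r)=0$ for every $i$.

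The vanishing engine is the following observation. Because $\rank\Omega=\sum_i\rank\Omega_i\geq 2$ and $\Gamma$ is irreducible, Margulis' normal subgroup theorem shows that $\Gamma$, and every finite index subgroup $\Gamma'\leq\Gamma$, has finite abelianization; hence $b_1(\Omega/\Gamma')=0$ and $H^0(\Omega/\Gamma',\Omega^1)=0$. This is the one place where both the rank hypothesis and, crucially, the irreducibility of $\Gamma$ enter: for a reducible lattice in a product one of the factor quotients can carry holomorphic one-forms, and indeed pulling back such a one-form from a single factor produces a nonzero point of the spectral base. I would then try to realize the $\lambda_j$ on such a locally symmetric quotient. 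The unordered set $\{\lambda_1,\dots,\lambda_r\}$ is $\Gamma$-invariant, so the monodromy of the spectral cover defines a homomorphism to $S_r$; passing to its kernel $\Gamma'$ one hopes the individual $\lambda_j$ become single-valued and descend to $H^0(\Omega/\Gamma',\Omega^1)=0$, forcing $\lambda_j\equiv 0$ and hence $a=0$.

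The main obstacle is exactly the ramification of the spectral cover. The eigen-one-forms are a priori single-valued only away from the branch locus $B\subset X$ where the $\lambda_j$ collide, and trivializing their monodromy uses $\pi_1(X\setminus B)$ rather than $\Gamma=\pi_1(X)$, so the cover one lands on need not be a locally symmetric space $\Omega/\Gamma'$ and the $b_1=0$ input cannot be applied naively. I expect the heart of the argument to be showing that the $\lambda_j$ nonetheless come from honest one-forms on a genuine finite cover $\Omega/\Gamma'$: since each $\lambda_j$ is a root of a monic polynomial with holomorphic coefficients it is locally bounded near $B$ and extends across this codimension $\geq 1$ locus by Riemann extension, so the real point is to arrange that after extension the family is equivariant for a lattice $\Gamma'$ rather than merely for the branched-cover group. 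The alternative route, and given Mok's Hermitian metric rigidity likely the more robust one, is to work directly on $\Omega$ with the $\lambda_j$ as a multivalued but $\Gamma$-invariant family: one bounds their norm in the invariant metric (which descends to the compact $X$) and uses the structure of $\mathfrak{p}^+$ together with the Zariski density of $\Gamma$ to exclude any nonzero such family. Either way, once the eigen-one-forms are shown to vanish the theorem follows, and the decomposition $\Omega=\Omega_1\times\cdots\times\Omega_m$ serves only to pin down the rank hypothesis and to make transparent why the irreducibility of $\Gamma$ is indispensable.
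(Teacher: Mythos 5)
Your reduction of the statement to the vanishing of the multivalued eigen-one-forms is correct, and so is your Margulis-based vanishing engine (every finite-index subgroup $\Gamma'\leq\Gamma$ is again an irreducible higher-rank lattice, hence has finite abelianization, hence $H^0(\Omega/\Gamma',\Omega^1)=0$). But the proof has a genuine gap, and it is exactly the one you flag without resolving: the monodromy of the multiset $\{\lambda_1,\dots,\lambda_r\}$ is a homomorphism from $\pi_1(X\setminus B)$ to the symmetric group, not from $\Gamma=\pi_1(X)$, and it need not factor through $\Gamma$. Passing to its kernel produces a finite cover of $X\setminus B$, i.e.\ a \emph{ramified} cover of $X$, which is not of the form $\Omega/\Gamma'$, so the input $H^0(\Omega/\Gamma',\Omega^1)=0$ never applies to the individual $\lambda_j$. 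This cannot be repaired by Riemann extension: if the local monodromy of the multiset around a component of the branch divisor $B$ is nontrivial, it remains nontrivial on \emph{every} finite \'etale cover of $X$ (the preimage of a transverse disc consists of discs mapping isomorphically), so the $\lambda_j$ can never be made single-valued without ramifying along $B$. In other words, your argument works only when the spectral cover is unramified, and the branched case --- the only hard case --- is untouched. Your ``alternative route'' gestures at the right tool but is not an argument: neither a norm bound in the invariant metric nor Zariski density of $\Gamma$ is what makes the metric-rigidity approach function.

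For comparison, the paper's proof runs entirely along that second, undeveloped route, with different mechanisms and no passage to covers. Each component $s_k$ of a putative nonzero spectral datum is converted into a singular Hermitian metric $h_k$ on $\MSO_{\mbP(TX)}(-1)$ by $\|v\|_{h_k}=|s_k(v^k)|^{1/k}$; its curvature current is non-positive, being minus a current of integration (Example \ref{l.Current-Global-section}). Mok's Finsler metric rigidity theorem (Theorem \ref{thm_Mok_rigidity}) then gives constants $c_{ik}\geq 0$ with $\|v\|_{h_k}=c_{ik}\|v\|_{g}$ for all vectors lying over the minimal characteristic bundles $\MS^i(\Omega)$. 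If all $c_{ik}=0$, the elementary-symmetric-polynomial lemma (Lemma \ref{l.zeros-spectral-data}) shows the local roots $w_1,\dots,w_r$ of the spectral datum annihilate every minimal characteristic vector, and the pointwise linear non-degeneracy of $\MS^i(\Omega)\subset\mbP(T_i)$ then forces $w_j\equiv 0$, i.e.\ $\mathbf{s}=0$, a contradiction. If some $c_{ik}>0$, then $s_k$ is nonvanishing along those directions, so there the curvature current of $h_k$ vanishes, which is incompatible with $h_k$ being a positive multiple of the (negatively curved) canonical metric as the rigidity theorem demands. This handles branched and unbranched spectral covers uniformly, never invokes Margulis, and is the key idea your proposal is missing.
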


By \cite[Appendix IV, Proposition 3]{Mok1989}, the cotangent bundle of a compact quotient $X$ of an irreducible bounded symmetric domain $\Omega$ of rank $\geq 2$ by a torsion-free lattice $\Gamma \subset {\rm Aut}(\Omega)$ is big (aka \emph{almost ample} in \cite{Mok1989}), and a similar proof yields the same when $\Omega$ is reducible and of rank $\ge 2$, and the lattice $\Gamma$ is irreducible  --- see also \cite[Theorem 1.1]{brunebarbeklingler2013symmetric}. In particular, for $k$ sufficiently large and sufficiently divisible, we have
        \[
        h^0(X,\Sym^k\Omega_X^1) \sim O(k^{2\dim(X)-1}).
        \]
        So Theorem \ref{t.SpBaseArithVar} is somewhat surprising because the dimension of the Hitchin space $\MA^r_X$ is very large for $r\gg 1$, while the closed subset $\MS_X^r$ of $\MA_X^r$ is just a single point. This may also be seen as a strong piece of evidence that Theorem \ref{thm_main_theorem_general_variety} may be applicable in other interesting cases.

On the other hand, we remark that it has been shown by B.~Klingler in \cite[Theorem 1.6]{Klingler2013} the vanishing of the Hitchin base in some small ranges for compact quotients of certain irreducible bounded symmetric domains. Its proof is based on classical plethysm, a vanishing theorem of the last author \cite[p.~205 and p.~211]{Mok1989} and a case-by-case argument depending on the types of $\Omega$. However, one cannot expect to generalize its proof to large $r$: the dimension of $\MA_X^r$ can be as large as possible for $r\gg 1$ as explained above. Our argument is purely geometric and provides a unified approach to all $\Omega$ and all $r$. It relies on a Finsler metric rigidity theorem of the last author proved in \cite{mok2004extremal}.

Finally we note that Theorem \ref{t.SpBaseArithVar} cannot be strengthened to the rank-1 case, i.e., the case where $\Omega=\mbfB_{\mbC}^n$: there exists ball quotients satisfying $H^0(X,\Omega_X^1)\not=0$ and then one can easily construct a non-zero element in $\MS_X^r$ for any $r\geq 1$ in this case. 

Thanks to Theorem \ref{thm_main_theorem_general_variety}, we conclude from the vanishing of the spectral base the following result. 
\begin{corollary}
\label{c.RigInte}
In the notation of Theorem \ref{t.SpBaseArithVar} and under the assumption given there, any reductive representation $\rho:\Gamma\to \GL_r(\mbC)$ is rigid and integral for any $r\geq 1$. Moreover, it is a complex direct factor of a $\mbZ$-variation of Hodge  structures.
\end{corollary}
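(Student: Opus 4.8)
The plan is to obtain Corollary \ref{c.RigInte} as a formal consequence of the two main theorems already established: the geometric input is contained in Theorem \ref{t.SpBaseArithVar}, while the representation-theoretic conclusion is exactly Theorem \ref{thm_main_theorem_general_variety}. So the only real task is to check that $X=\Omega/\Gamma$ satisfies the hypotheses needed to chain the two results together, namely that $X$ is a projective manifold and that $\pi_1(X)\cong\Gamma$.

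First I would verify these two points. Since $\Gamma$ is torsion-free, it acts freely and properly discontinuously on $\Omega$, so $X=\Omega/\Gamma$ is a complex manifold, and it is compact because $\Gamma$ is cocompact. A bounded symmetric domain admits a bounded convex Harish--Chandra realization, hence is contractible, so $\Omega\to X$ is the universal covering and $X$ is a $K(\Gamma,1)$; in particular $\pi_1(X)\cong\Gamma$, and reductive representations of $\pi_1(X)$ correspond to reductive representations of $\Gamma$. Projectivity follows from the fact that the $\Aut(\Omega)$-invariant K\"ahler--Einstein (Bergman) metric of negative Ricci curvature descends to $X$, exhibiting $K_X$ as ample, whence $X$ is projective by the Kodaira embedding theorem.

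With these identifications in place, Theorem \ref{t.SpBaseArithVar} applies verbatim under the stated hypotheses on $\Omega$ and $\Gamma$ and yields $\MS^r_X=0$ for every $r\geq 1$. Fixing any $r\geq 1$, I would then invoke Theorem \ref{thm_main_theorem_general_variety}(1) for this $X$: the vanishing $\MS^r_X=0$ guarantees that every reductive representation $\rho\colon\pi_1(X)\to\GL_r(\mbC)$, equivalently every reductive $\rho\colon\Gamma\to\GL_r(\mbC)$, is rigid and integral, and is a complex direct factor of a $\mbZ$-variation of Hodge structures. Letting $r$ range over all positive integers gives the corollary. I do not expect a substantive obstacle here: the content resides entirely in the two theorems, and the only points deserving care are the identification $\pi_1(X)\cong\Gamma$ and the projectivity of $X$ --- precisely the conditions under which Theorem \ref{thm_main_theorem_general_variety}, stated for projective manifolds, may be applied --- both of which are standard for torsion-free cocompact quotients of bounded symmetric domains.
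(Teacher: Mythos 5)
Your proposal is correct and follows essentially the same route as the paper: the paper's own proof simply chains Theorem \ref{t.SpBaseArithVar} (vanishing of $\MS_X^r$) with Theorem \ref{thm_main_theorem_general_variety} (equivalently, Theorem \ref{thm_new_methods} together with Proposition \ref{prop_bounded_Z_variation}). Your additional verifications --- that $\Omega$ is contractible so $\pi_1(X)\cong\Gamma$, and that $X$ is projective via the descended K\"ahler--Einstein metric and Kodaira embedding --- are standard facts the paper leaves implicit, and they do no harm.
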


The rigidity result allows us to recover Margulis' rigidity result \cite{Margulis1991} in the case of cocompact lattices. It can be seen as a strengthening of Klingler's result \cite{Klingler2013} for any $r$ and any $\Omega$ of rank $\geq 2$. Moreover, we note that Margulis' rigidity only concerns representations, which are identified with topologically trivial Higgs bundles via the non-abelian Hodge correspondence, while we can also obtain a result on general Higgs bundles from the vanishing of the spectral base as in the corollary given below. This can be used to help us understand the analytic aspect of the Hermitian-Yang-Mills equation, for which we refer the reader to \cite{he2020behavior} for a discussion.

\begin{corollary}
\label{cor_nilpotent_Higgs}
In the notation of Theorem \ref{t.SpBaseArithVar}
and under the assumption given there, every Higgs bundle over $X$ is nilpotent. 
\end{corollary}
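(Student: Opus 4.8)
The plan is to read off the corollary directly from the vanishing of the spectral base established in Theorem \ref{t.SpBaseArithVar}, using the factorization of the Hitchin morphism through the spectral base due to Chen--Ng\^o. First I would recall the precise meaning of nilpotency in this context: a Higgs bundle $(\msE,\vp)$ of rank $r$ on $X$ is nilpotent precisely when it lies in the nilpotent cone, i.e. when $\sh_X(\msE,\vp)=0$ in $\MA_X^r$. Concretely, writing $\vp\in H^0(X,\End\msE\otimes\Omega_X^1)$, the integrability condition $\vp\we\vp=0$ forces the local components of $\vp$ to commute, so that the characteristic coefficients $p_i(\vp)$ are well-defined global sections of $\Sym^i\Omega_X^1$ and $\sh_X(\msE,\vp)=(p_1(\vp),\dots,p_r(\vp))$; their simultaneous vanishing is equivalent to $\vp$ being a nilpotent endomorphism in every cotangent direction.

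Then the argument is a two-line deduction. By the Chen--Ng\^o result recalled in the introduction, every rank-$r$ Higgs bundle $(\msE,\vp)$ on $X$ satisfies $\sh_X(\msE,\vp)\in \MS_X^r$, because the integrability condition $\vp\we\vp=0$ constrains the characteristic coefficients to lie in the spectral base. Theorem \ref{t.SpBaseArithVar} asserts that $\MS_X^r=0$ for every $r\geq 1$. Combining these, the Hitchin morphism $\sh_X$ is identically zero, so $p_i(\vp)=0$ for all $i$ and every Higgs bundle $(\msE,\vp)$.

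Finally I would translate the vanishing of all invariant polynomials back into nilpotency. Since the $p_i(\vp)$ vanish as holomorphic sections, at every point $x\in X$ and in every cotangent direction the characteristic polynomial of the induced endomorphism of $\msE_x$ is $t^r$; hence $\vp$ takes values in nilpotent endomorphisms everywhere, which is exactly the assertion that $(\msE,\vp)$ is nilpotent. As this holds for an arbitrary rank and an arbitrary Higgs bundle, every Higgs bundle over $X$ is nilpotent.

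The deduction is short, so there is no serious analytic obstacle; the one point requiring care is conceptual rather than computational. One must make sure the factorization of the Hitchin morphism through $\MS_X^r$ is applied at the level of an individual (not necessarily semistable) Higgs bundle, and that the resulting statement ``all characteristic coefficients vanish'' is genuinely equivalent to nilpotency of the Higgs field --- an equivalence that relies on the integrability condition $\vp\we\vp=0$ to guarantee the commutativity of the local components of $\vp$. Once these identifications are in place, the corollary follows immediately from $\MS_X^r=0$.
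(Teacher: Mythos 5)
Your proof is correct and takes essentially the same approach as the paper, whose entire proof is the remark that the corollary ``follows directly from the vanishing of the spectral base'' --- i.e.\ precisely the chain you spell out: the Chen--Ng\^o factorization of the Hitchin morphism through $\MS_X^r$, the vanishing $\MS_X^r=0$ from Theorem \ref{t.SpBaseArithVar}, and the pointwise equivalence between vanishing characteristic coefficients and nilpotency of $\vp$. Your closing caveat is also handled correctly: the factorization is a pointwise consequence of $\vp\we\vp=0$ alone (as in the proof of Proposition \ref{p.CN-rankone}), so it applies to arbitrary, not merely polystable, Higgs bundles, which is exactly what the corollary's ``every Higgs bundle'' requires.
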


\textbf{Acknowledgements.} 
	The authors also wish to express their gratitude to a great many people for their interest and helpful comments. 
	Among them are Shan-Tai Chan, Ya Deng, Ziyang Gao, Andriy Haydys, Thomas Walpuski, Pengyu Yang, Kang Zuo. S.~He is supported by National Key R\&D Program of China (No.2023YFA1010500) and NSFC grant (No.12288201). J.~Liu is supported by the R\&D Program of China (No.2021YFA1002300), the NSFC grant (No.12288201), the CAS Project for Young Scientists in Basic Research (No.YSBR-033) and the Youth Innovation Promotion Association CAS. N.~Mok is supported by the GRF grant 17306523 of the Hong Kong Research Grants Council.

\section{Higgs bundles and Hitchin morphism}

In this section, we delve into the non-abelian Hodge correspondence and explore the Hitchin morphism over a projective variety. This subject has garnered extensive attention in various notable works, including \cite{hitchin1987self, Simpson1988Construction, simpson1994moduli, simpson1994moduli2}. Readers interested
in the topic may consult the surveys \cite{garcia2015introduction, Wentworth2016,schaposnik2018introduction}.

\subsection{Higgs bundle and non-abelian Hodge correspondence}

Let $X$ be a projective manifold, and denote by $\Omega_X^1$ the holomorphic cotangent bundle of $X$. We collect some basic definitions and facts about Higgs sheaves/bundles, which we refer \cite{Simpson1992,BandoSiu1994,BiswasSchumacher2009,LiZhangZhang2017}.
	
	\begin{definition}
		A Higgs sheaf on $X$ is a pair $(\msE,\vp)$, where $\msE$ is a torsion-free coherent sheaf on $X$ and $\vp\colon \MSE \rightarrow \MSE\otimes \Omega_X^1$, called a Higgs field, such that the composed morphism
        \[
        \MSE \xrightarrow{\varphi} \MSE \otimes \Omega_X^1 \xrightarrow{\varphi\otimes \id} \MSE\otimes \Omega_X^1\otimes \Omega_X^1 \xrightarrow{\id\otimes \wedge} \MSE\otimes \Omega_X^2
        \]
        vanishes. Following tradition, the composed morphism will be denoted by $\vp\wedge\vp$ and the equation $\vp\we \vp=0$ is called the Higgs equation. 
	\end{definition}
	
    Given a Higgs sheaf $(\MSE,\varphi)$ over a projective manifold $X$, a coherent subsheaf $\MSF\subset \msE$ is said to be \emph{$\vp$-invariant} if and only if $\vp(\MSF)\subset \MSF\otimes\Omega_X^1$. Now we can introduce the concept of slope stability. Recall that, given a torsion-free coherent sheaf $\MSE$ on an $n$-dimensional projective manifold $X$, the \emph{slope $\mu(\MSE)$} of $\MSE$ with respect to $\omega$ is defined to be 
    $$
    \mu(\MSE)=\frac{\deg_{\omega}(\MSE)}{\rank\; \MSE}=\frac{c_1(\MSE)\cdot \omega^{n-1}}{\rank\;\MSE}.
    $$ 

	\begin{definition}
		A Higgs sheaf $(\msE,\vp)$ is called stable (resp. semistable) if and only if for {\color{black}any} $\vp$-invariant coherent subsheaf $\MSF\subset \msE$, with $0<\rank(\MSF)<\rank(\MSE)$, we have 
        \[
        \mu(\MSF)<\mu(\msE)\quad (\textup{resp.}\,\mu(\MSF)\leq \mu(\MSE)).
        \]
        A Higgs sheaf $(\msE,\vp)$ is called polystable if $(\msE,\vp)$ is semistable and 
       \[
       (\msE,\vp)\cong (\msE_1,\vp_1)\oplus\cdots \oplus (\msE_r,\vp_r) {\color{black},}
       \]
       where $(\msE_i,\vp_i)$ are stable Higgs sheaves with the same slope.
	\end{definition}
We will now focus on Higgs bundles, equivalently locally free Higgs sheaves, and will return to the general notion of Higgs sheaves in \S\,\ref{ss.spectralvariety}, where such sheaves are constructed from spectral varieties defined by spectral data.

Let $E$ be a complex smooth vector bundle over $X$. We write $\Omega^{p,q}(E)$ for the complex vector space of $E$-valued $(p,q)$-forms on $X$. In the sequel of this paper, we will naturally identify the holomorphic structures on $E$ with the $\bar{\pa}$-operators $\bar{\pa}_E\colon \Omega^{p,q}(E)\to \Omega^{p,q+1}(E)$ satisfying the integrability condition $\bar{\pa}_E^2=0$. We denote by $\msE:=(E,\bar{\pa}_E)$ the holomorphic vector bundle with the holomorphic structure defined by $\bar{\pa}_E$ if there is no confusion.
 
Let $g\in \Aut(E)$. Then $g$ acts on the Higgs bundles $\MSE=((E,\bar{\pa}_E),\vp)$ by $g\cdot(\bar{\pa}_E,\vp)=(g^{-1}\circ\bar{\pa}_E\circ g,g^{-1}\circ\Phi\circ g).$ We define the moduli stack of polystable Higgs bundles of rank $r$ as
\begin{equation}
\begin{split}
    \MM^{{\rm stack}, r}_{\Higgs}\coloneqq \{(\msE,\vp)|(\msE,\vp)\;\mathrm{polystable}\}/\Aut(E).
\end{split}
\end{equation}
	
A complex smooth vector bundle $E$ is said to be \emph{topologically trivial} if all the Chern classes of $E$ in $H^{*}(X;\mathbb{Q})$ vanish. Let $[(\msE,\vp)]$ be the equivalence class of $(\msE,\vp)$ in the orbit of $\Aut(E)$. Under S-equivalence of $\Aut(E)$ action, a semi-stable topologically trivial Higgs bundle $[(\msE,\vp)]$ is polystable. Following \cite[Proposition 6.6]{simpson1994moduli2}, we define the Dolbeault moduli space $\MMD^r$ as the moduli space parametrizing topologically trivial polystable Higgs bundles on $X$, which is a quasiprojective variety. It follows from \cite[Proposition 3.4]{Simpson1988Construction} that a polystable Higgs bundle $(\msE,\vp)$ is topologically trivial if and only if $c_1(\msE)\cdot \omega^{n-1}=0$ and $c_2(\msE)\cdot \omega^{n-2}=0$.

Let $X$ be a projective variety. The $\GL_r(\mbC)$ character variety $\mfR^{\GL_r(\mbC)}$ of $X$ is defined to be the set of conjugacy classes of reductive representations of the fundamental group given by
\begin{equation}
    \mfR^{\GL_r(\mbC)}\coloneqq \{\rho\colon \pi_1(X)\to \GL_r(\mbC)\mid \rho\;\mathrm{reductive}\}/\sim.
\end{equation}

\begin{theorem}[\protect{\cite{hitchin1987self,donaldson1987twisted,Simpson1988Construction,corlette1988flat}}]
\label{t.Homeo}
    There exists a bijective map 
    \begin{equation}
    \label{eq_NAH_map}
        \xi:\MMD^r\to \mfR^{\GL_r(\mbC)},\;
    \end{equation}
    which is real analytic over the smooth locus.
\end{theorem}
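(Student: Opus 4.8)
The plan is to realize $\xi$ and its inverse through the intermediate \emph{de Rham} picture of flat bundles, constructing in each direction a distinguished harmonic metric that interpolates between the Higgs and the flat descriptions. By the Riemann--Hilbert correspondence, the monodromy map identifies isomorphism classes of flat bundles $(V,\na)$ on $X$ with representations $\pi_1(X)\to\GL_r(\mbC)$, and under it semisimple flat connections correspond exactly to reductive representations; so it suffices to produce a bijection between $\MMD^r$ and semisimple flat bundles. The two directions are then governed by the two fundamental existence theorems for canonical metrics: the Hitchin--Simpson theorem on the existence of a Hermitian--Yang--Mills metric on a polystable Higgs bundle, and the Corlette--Donaldson theorem on the existence of an equivariant harmonic metric for a reductive representation.

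To define $\xi$, I start with a polystable Higgs bundle $(\msE,\vp)$ satisfying $c_1(\msE)\cd\om^{n-1}=0$ and $c_2(\msE)\cd\om^{n-2}=0$. By Hitchin--Simpson there is a Hermitian metric $H$ solving the Hermitian--Yang--Mills equation for the Higgs pair, i.e.\ $\Lam_\om\big(F_H+[\vp,\vp^{\da_H}]\big)=0$, where $F_H$ is the Chern curvature of $(\bar{\pa}_E,H)$. The vanishing of the two Chern numbers forces the full curvature term $F_H+[\vp,\vp^{\da_H}]$ to vanish, so the connection $D_H:=\bar{\pa}_E+\pa_H+\vp+\vp^{\da_H}$ is flat. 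Its monodromy is a representation $\rho$, and the decomposition of $(\msE,\vp)$ into stable factors of slope $0$ splits $D_H$ into irreducible flat bundles, so $\rho$ is reductive; I set $\xi([(\msE,\vp)]):=[\rho]$.

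For the inverse, I start with a reductive $\rho$ and its semisimple flat bundle $(V,\na)$. By Corlette--Donaldson there is a $\rho$-equivariant harmonic map into the symmetric space $\GL_r(\mbC)/\mathrm{U}(r)$, equivalently a harmonic metric $H$ on $(V,\na)$. Writing $\na=d_A+\Psi$ for its $H$-unitary part $d_A$ and $H$-self-adjoint part $\Psi$, I set $\bar{\pa}_E:=d_A^{0,1}$ and $\vp:=\Psi^{1,0}$. The harmonic map equation together with flatness of $\na$, unwound through the K\"ahler identities, yields precisely $\bar{\pa}_E\vp=0$ and the Higgs equation $\vp\we\vp=0$, while the same Weitzenb\"ock computation shows $H$ solves the Hermitian--Yang--Mills equation for $(\msE,\vp)$; polystability of $(\msE,\vp)$ follows, and Chern--Weil applied to the flat $\na$ shows the Chern numbers vanish, producing a point of $\MMD^r$.

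These two assignments are mutually inverse because in each case the auxiliary metric is unique up to the automorphisms of the object (up to a scalar on each stable/irreducible factor), so the composite recovers the original isomorphism class --- this is where semisimplicity and polystability enter essentially. Finally, over the smooth locus, where $(\msE,\vp)$ is stable and $\rho$ correspondingly irreducible, the elliptic system defining $H$ is nondegenerate, so elliptic regularity and the implicit function theorem show that $H$, and hence $\xi$, varies real-analytically with the moduli parameters. The main obstacle is genuinely analytic: the two existence theorems for canonical metrics, proved by an Uhlenbeck--Yau-type continuity/heat-flow argument in the polystable case and by a harmonic-map flow in the reductive case, and in dimension $n\ge 2$ these must be run under only the vanishing of $c_1\cd\om^{n-1}$ and $c_2\cd\om^{n-2}$ rather than genuine topological triviality.
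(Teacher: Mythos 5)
Your proposal is correct and is essentially the standard argument: the paper itself gives no proof of this statement, citing Hitchin, Donaldson, Simpson and Corlette, and your sketch reproduces precisely the route of those references (Hitchin--Simpson HYM metrics on polystable Higgs bundles with vanishing $c_1\cdot\omega^{n-1}$ and $c_2\cdot\omega^{n-2}$ in one direction, Corlette--Donaldson harmonic metrics plus the Siu--Sampson pluriharmonicity computation in the other, uniqueness of canonical metrics up to automorphisms for bijectivity, and elliptic theory for real analyticity on the smooth locus). No gaps at the level of detail at which the theorem is invoked here.
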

 
\subsection{Hitchin morphism}
The Hitchin morphism is a useful tool to study the moduli space of Higgs bundles. In this subsection, we will introduce the Hitchin morphism for projective manifolds, following \cite{hitchin1987self, hitchin1987stable, simpson1994moduli}. Let $X$ be a projective manifold. Then the \emph{Hitchin base} of $X$ with rank $r$ is defined to be 
	\begin{equation}
		\MA^r_X\coloneqq \bigoplus_{i=1}^rH^0(X,\Sym^i\Omega_X^1).
	\end{equation}
	The \emph{Hitchin morphism} for the moduli stack of Higgs bundle is defined as follows: 
	\begin{equation}
		\begin{split}
  \label{e.hitchinmap}
			\sh_X:\MMH^{\textup{stack},r} \to \MA^r_X,\quad [(\MSE,\vp)]\mapsto (\Tr(\vp),\Tr(\vp^2)\cdots,\Tr(\vp^r)).
		\end{split}
	\end{equation}

	\begin{theorem}[\protect{\cite{hitchin1987self,simpson1994moduli}}]
 \label{thm_Hitchinmap_proper}
		The restriction $\sh_X|_{\MMD^ {r}}:\MMD^r\to \MA^r_X$ is proper, and it is also surjective in the case where $\dim(X)=1$. 
	\end{theorem}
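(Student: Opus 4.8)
The plan is to establish the two assertions separately: properness, which is the substantive point and holds in all dimensions, and surjectivity, which is special to curves and follows from the spectral correspondence. For properness I would verify the valuative criterion. Let $R$ be a discrete valuation ring with fraction field $K$, and suppose we are given a polystable topologically trivial Higgs bundle $(\msE_K,\vp_K)$ on $X_K:=X\times_{\mbC}\mathrm{Spec}(K)$ whose Hitchin image $\sh_X(\msE_K,\vp_K)$ extends to an $R$-point of $\MA^r_X$; I must produce, after a finite base change, an extension to a point of $\MMD^r(R)$. The extension hypothesis says precisely that the invariants $\Tr(\vp_K^{i})$, a priori sections of $\Sym^i\Omega^1_{X_K}$, extend to holomorphic sections of $\Sym^i\Omega^1_{X_R}$; equivalently, the eigenvalues of $\vp$ stay bounded as one approaches the central fiber.

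Concretely I would: (i) spread $\msE_K$ out to an $R$-flat torsion-free coherent sheaf $\msE_R$ on $X_R$, with $\vp_R$ the induced, a priori meromorphic, Higgs field; here the extension of the characteristic polynomial bounds the eigenvalues of $\vp_R$, but $\vp_R$ may still acquire poles in its nilpotent part. (ii) Repair this together with stability by a Langton-type argument: whenever the central fiber is unstable or $\vp_R$ is meromorphic, replace $\msE_R$ by an elementary modification (Hecke transform) along a suitable $\vp$-invariant subsheaf and iterate; Langton's theorem guarantees that the process terminates at an $R$-flat family whose central fiber is semistable and whose Higgs field is holomorphic. Since Chern classes are locally constant in the flat family over the connected base $\mathrm{Spec}(R)$, the limit remains topologically trivial, so passing to its polystable $S$-equivalent representative yields the desired $R$-point of $\MMD^r$.

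The main obstacle is step (ii), and more precisely the following phenomenon: a bounded characteristic polynomial controls only the eigenvalues of $\vp$, whereas its nilpotent part can still blow up as one approaches the central fiber --- already visible in rank $2$, where $\vp=\begin{psmallmatrix}0&t^{-1}\\0&0\end{psmallmatrix}$ ($t$ a uniformizer) has vanishing invariants yet a genuine pole. The heart of the argument is to show that such growth in the nilpotent directions can always be absorbed by $\vp$-compatible elementary modifications, simultaneously restoring holomorphicity of $\vp_R$ and semistability of the central fiber, and to marry this sheaf-theoretic procedure with the boundedness supplied by the extended Hitchin image. One can instead proceed analytically: equip each polystable Higgs bundle with the harmonic metric $h$ underlying Theorem~\ref{t.Homeo}, derive a Weitzenb\"ock inequality for $|\vp|_h^2$, bound $\sup_X|\vp|_h$ by the sup-norm of the Hitchin data through the maximum principle, and then extract a convergent subsequence from any sequence with bounded Hitchin image via Uhlenbeck compactness; along this route the obstacle migrates to controlling the degeneration of $h$.

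For surjectivity when $\dim X=1$, write $C=X$ and $K_C:=\Omega^1_C$. Given $a=(a_1,\dots,a_r)\in\MA^r_C$ with $a_i\in H^0(C,K_C^{\otimes i})$, form the spectral curve
\[
C_a\subset\Tot(K_C),\qquad \lam^{\,r}+\pi^{*}a_1\,\lam^{\,r-1}+\cdots+\pi^{*}a_r=0,
\]
where $\pi:\Tot(K_C)\to C$ and $\lam$ is the tautological section of $\pi^{*}K_C$. By the Beauville--Narasimhan--Ramanan correspondence, when $C_a$ is integral a line bundle $L$ on $C_a$ yields a stable Higgs bundle $(\pi_{*}L,\vp_L)$ on $C$, with $\vp_L$ induced by multiplication by $\lam$ and having characteristic polynomial $a$; choosing $\deg L$ so that $\deg\pi_{*}L=0$ places this Higgs bundle in $\MMD^r$. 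Thus $\sh_X|_{\MMD^r}$ hits every $a$ whose spectral curve is integral, a locus containing a dense open subset of the affine space $\MA^r_C$ once $g\ge2$ (the cases $g\le1$, where integral spectral curves are scarce, are handled directly by zero or constant regular semisimple Higgs fields on $\MO_C^{\oplus r}$). Since properness forces the image to be closed, and a closed subset of the irreducible space $\MA^r_C$ containing a dense subset is everything, surjectivity follows.
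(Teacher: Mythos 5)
A structural point first: the paper itself contains no proof of Theorem~\ref{thm_Hitchinmap_proper} --- it is imported verbatim with the citations \cite{hitchin1987self,simpson1994moduli} --- so your attempt can only be measured against the arguments in those references. Your surjectivity half (for $\dim X=1$) is correct and essentially complete: for $a$ whose spectral curve $C_a$ is integral, the Beauville--Narasimhan--Ramanan pushforward $(\pi_*L,\vp_L)$ of a line bundle of suitable degree is a degree-zero Higgs bundle with characteristic polynomial $a$, and it is automatically stable (any $\vp_L$-invariant subsheaf is the pushforward of a subsheaf of $L$ on the integral curve $C_a$, hence has rank $0$ or $r$); such $a$ are dense in $\MA^r_C$ when $g\ge 2$, your diagonal constant Higgs fields on $\MO_C^{\oplus r}$ cover $g\le 1$ (for $g=0$ the Hitchin base is zero), and properness closes up the image.

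The properness half --- which, as you say yourself, is the substantive assertion --- is a plan whose key step is missing, and you flag it as "the main obstacle" rather than resolve it. The gap is real: in your step (i) you extend $\msE_K$ first and only afterwards try to repair the meromorphic $\vp_R$, and no argument is given that Hecke modifications can simultaneously kill poles in the nilpotent part and restore semistability. The repair, which is essentially how \cite{simpson1994moduli} proceeds, is to reverse the order: regard a Higgs sheaf as a coherent sheaf on $\Tot(\Omega_X^1)$ with support finite over $X$, the Higgs field being multiplication by the tautological $1$-form. Since the Hitchin image extends over $R$, the support of the generic-fibre sheaf lies in a spectral cover that is finite over $X\times\mathrm{Spec}(R)$; therefore the closure of this sheaf in $\Tot(\Omega_X^1)\times\mathrm{Spec}(R)$, after dividing out $R$-torsion, is an $R$-flat family of Higgs sheaves whose Higgs fields are \emph{holomorphic} by construction --- your rank-two pole example evaporates because the closure on the spectral side selects the correct bundle extension automatically. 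Only then does a Langton iteration run, purely to restore semistability of the special fibre; its elementary modifications are along $\vp$-invariant destabilizing subsheaves, so holomorphy is never again at risk, and vanishing of Chern classes passes to the limit by flatness. Your alternative analytic route is Hitchin's actual proof for curves \cite{hitchin1987self}, but there too the required ingredients --- the maximum-principle bound on $\sup_X|\vp|_h$ for harmonic metrics in terms of the Hitchin data, followed by Uhlenbeck compactness --- are named as an obstacle rather than carried out. As written, the properness assertion therefore remains unestablished in your proposal.
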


\subsection{Spectral base}
We briefly recall the definition of the spectral base, which was introduced by T.~Chen and B.~Ng\^o in \cite{ChenNgo2020}.
\begin{definition}
\label{d.spectralbase}
    The spectral base $\MS^r_X$ is the subset of $\MA^r_X$ consisting of the elements $\textbf{s}=(s_1,\dots,s_r)\in \MA^r_X$ such that for any point $x\in X$, there exist $r$ elements $\omega_1,\dots, \omega_r\in \Omega_{X,x}^1$ satisfying $s_i(x)=\sigma_i(\omega_1,\dots,\omega_r)$, where $\sigma_i$ is the $i$-th elementary symmetric polynomial in $r$ variables. Moreover, an element $\textbf{s}\in \MS^r_X$ is called a spectral datum.
\end{definition}

    Let $V$ be a complex vector space of dimension $n$ and let $\Chow^r(V)$ be the Chow variety of zero cycles of length $r$ on $V$. By \cite[Theorem 4.1]{ChenNgo2020}, the following natural map
    \[
    \Chow^r(V)  \rightarrow V\times \Sym^2 V \times \dots \times \Sym^r V,\quad [v_1,\dots, v_r] \mapsto (\sigma_1, \sigma_2,\dots, \sigma_r),
    \]
    is a closed embedding and thus it induces the following closed embedding
    \begin{equation}
    \label{e.closedembeddingChow^r}
        \Chow^r(\Tot(\Omega_X^1)/X) \hookrightarrow \Tot(\Omega_X^1)\times_X \Tot(\Sym^2\Omega_X^1)\times_X \dots \times_X \Tot(\Sym^r \Omega_X^1),
    \end{equation}
    where $\Tot(\bullet)$ denotes the total space of the corresponding vector bundle and the space $\Chow^r(\Tot(\Omega_X^1)/X)$ is the relative Chow space of zero cycles of length $r$. In particular, under this closed embedding, the spectral base $\MS^r_X$ can be identified with the space of sections $\sigma:X\rightarrow \Chow^r(\Tot(\Omega_X^1)/X)$ and so $\MS^r_X$ is a closed subset of $\MA^r_X$.

    The following observation shows that it suffices to check the condition in Definition \ref{d.spectralbase} over general points to see whether an element $\textbf{s}\in \MA^r_X$ is a spectral datum.

\begin{lemma}
\label{l.birationalinvarance}
    Let $\textbf{s}\in \MA^r_X$ be an element. If there exists a dense Zariski open subset $X^{\circ}$ of $X$ such that $\textbf{s}$ satisfies the condition in Definition \ref{d.spectralbase} for any point $x\in X^{\circ}$, then $\textbf{s}\in \MS^r_X$.
\end{lemma}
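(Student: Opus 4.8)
The plan is to reinterpret the pointwise condition of Definition \ref{d.spectralbase} as the statement that a naturally associated section factors through the closed subscheme cut out by the relative Chow variety, and then to invoke the elementary fact that a closed condition holding on a dense subset holds everywhere. In other words, I want to show that ``being a spectral datum'' is a Zariski-closed condition on the base $X$, so that verifying it on a dense open $X^{\circ}$ is enough.

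First I would package the data $\textbf{s}=(s_1,\dots,s_r)$ into a single morphism. Since each $s_i\in H^0(X,\Sym^i\Omega_X^1)$ is a global section, together they determine a holomorphic (equivalently, by GAGA, algebraic) section
\[
\tilde{s}\colon X\to P:=\Tot(\Omega_X^1)\times_X\Tot(\Sym^2\Omega_X^1)\times_X\cdots\times_X\Tot(\Sym^r\Omega_X^1),\qquad x\mapsto (s_1(x),\dots,s_r(x)).
\]
Next I would identify the pointwise condition with membership in a closed subscheme. By the closed embedding \eqref{e.closedembeddingChow^r}, the relative Chow variety $Z:=\Chow^r(\Tot(\Omega_X^1)/X)$ is a Zariski-closed subscheme of $P$, and by \cite[Theorem 4.1]{ChenNgo2020} its fiber $Z_x$ over each $x\in X$ is exactly the image of $\Chow^r(\Omega_{X,x}^1)$, namely the locus of tuples $(\sigma_1(\omega_1,\dots,\omega_r),\dots,\sigma_r(\omega_1,\dots,\omega_r))$ as $(\omega_1,\dots,\omega_r)$ ranges over $r$-tuples in the fiber $\Omega_{X,x}^1$. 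Hence the condition of Definition \ref{d.spectralbase} at $x$ is precisely $\tilde{s}(x)\in Z_x$, so that $\textbf{s}\in\MS^r_X$ if and only if $\tilde{s}(X)\subseteq Z$, equivalently $\tilde{s}^{-1}(Z)=X$.

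I would then conclude by a density argument. Because $\tilde{s}$ is a morphism of varieties and $Z\subseteq P$ is Zariski-closed, the preimage $\tilde{s}^{-1}(Z)$ is a Zariski-closed subset of $X$. The hypothesis gives $X^{\circ}\subseteq \tilde{s}^{-1}(Z)$ with $X^{\circ}$ dense, so taking closures yields $X=\overline{X^{\circ}}\subseteq \tilde{s}^{-1}(Z)$, whence $\tilde{s}^{-1}(Z)=X$. Therefore $\tilde{s}(X)\subseteq Z$ and $\textbf{s}\in\MS^r_X$.

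The only point that genuinely requires care — and thus the main obstacle — is verifying that the fiberwise locus $Z_x$ coincides \emph{set-theoretically} with the elementary-symmetric-polynomial condition of Definition \ref{d.spectralbase}, rather than merely containing or being contained in it; this is exactly what the description of the Chow-to-symmetric-powers map in \cite[Theorem 4.1]{ChenNgo2020} supplies, and it is the reason the reduction to a closed-subscheme membership is legitimate. Everything else is formal: once the condition is recast as ``a section factors through a closed subscheme,'' the fact that this is a Zariski-closed condition on $X$ makes the density argument immediate.
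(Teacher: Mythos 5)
Your proposal is correct and follows essentially the same route as the paper: both encode $\textbf{s}$ as a section of the fibered product of total spaces and use the closed embedding \eqref{e.closedembeddingChow^r} together with a density argument (the paper phrases it via irreducibility of the image of the section, you via closedness of the preimage $\tilde{s}^{-1}(Z)$, which is the same argument). Your added care about the fiberwise set-theoretic identification of $Z_x$ with the elementary-symmetric condition is exactly what \cite[Theorem 4.1]{ChenNgo2020} provides, as the paper implicitly uses.
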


\begin{proof}
    Denote by 
    \[
    \sigma:X\rightarrow \Tot(\Omega_X^1)\times_X \Tot(\Sym^2 \Omega_X^1)\times_X \dots\times_X \Tot(\Sym^r \Omega_X^1)
    \]
    the section corresponding $\textbf{s}$. Then the image of $\sigma$ is contained in $\Chow^r(\Tot(\Omega_X^1)/X)$ over $X^{\circ}$ by our assumption and \eqref{e.closedembeddingChow^r}. However, since \eqref{e.closedembeddingChow^r} is a closed embedding and the image of $\sigma$ is irreducible, the image of $\sigma$ is contained in $\Chow^r(\Tot(\Omega_X^1)/X)$. Hence, $\sigma$ is a section of $\Chow^r (\Tot(\Omega_X^1)/X) \rightarrow X$.
\end{proof}

\begin{remark}
    \begin{enumerate}
        \item As an immediate consequence of Lemma \ref{l.birationalinvarance}, one can derive the birational invariance of the spectral base $\MS^r_X$ as proved by L.~Song and H.~Sun in \cite[Theorem 5.3]{SongSun2024}.

        \item For any positive integers $r'<r$, there exists a natural inclusion $\MS_X^{r'}\subset \MS_X^{r}$ defined as following:
        \[
        \textbf{s}'=(s_1,\dots,s_{r'}) \mapsto \textbf{s}:=(s'_1,\cdots,s'_{r},0,\cdots,0) \in \MA_X^r.
        \]
        We only need to show that $\textbf{s}$ is contained in $\MS_X^r$. Indeed, given an arbitrary point $x\in X$, let $w_1,\dots, w_{r'}\in \Omega_{X,x}^1$ be the points such that $s'_i=\sigma_i(w_1,\dots,w_{r'})$. Then one can easily conclude by considering the set $\{w_1,\dots,w_{r'},0^{r-r'}\}$. In particular, if $\MS_X^r=0$, then so is $\MS_X^{r'}$ for any $r'\leq r$.
    \end{enumerate}
\end{remark}

\begin{proposition}[\protect{\cite[Proposition 5.1]{ChenNgo2020}}]
		\label{p.CN-rankone}
		The Hitchin morphism $\sh_X:\MMH^{\textup{stack},r}\to \MA^r_X$ factors through the natural inclusion map $\iota_X:\MS^r_X\rightarrow \MA^r_X$. In other words, there exists a map $\ssd_X:\MMH^{\textup{stack},r}\to \MS^r_X$ such that the following diagram commutes: 
		\begin{equation}
           \label{e.spectralmorphism}
			\begin{tikzcd}[column sep=large,row sep=large]
				\MMH^{\textup{stack},r}\arrow[d,"{\ssd_X}" left] \arrow{dr}{\sh_X} &  \\
				\MS^r_X \arrow[r,"{\iota_X}" below] & \MA^r_X .
			\end{tikzcd}
		\end{equation}
		The map $\ssd_X$ is called the spectral morphism
	\end{proposition}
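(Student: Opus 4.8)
The plan is to reduce the global factorization statement to a pointwise claim about commuting endomorphisms, exploiting the integrability condition $\vp\we\vp=0$ in an essential way, and then to globalize using the description of $\MS^r_X$ as sections of the relative Chow space from \eqref{e.closedembeddingChow^r}. Let $(\msE,\vp)$ be a rank-$r$ Higgs bundle defining a point of $\MMH^{\textup{stack},r}$. Since $\msE$ is locally free and $\Omega_X^1$ is trivialized near any point, I would work fiberwise at an arbitrary $x\in X$: choosing local holomorphic coordinates $z_1,\dots,z_n$, write $\vp=\sum_{j}\vp_j\,dz_j$ with $\vp_j\in\End(\msE_x)\cong\End(\mbC^r)$. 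The Higgs equation then reads $\sum_{j<k}[\vp_j,\vp_k]\,dz_j\we dz_k=0$, so that $[\vp_j,\vp_k]=0$ for all $j,k$; the $\vp_j$ form a commuting family.

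The key linear-algebra input is that a commuting family of endomorphisms of a finite-dimensional complex vector space is simultaneously upper-triangularizable: there is a basis of $\msE_x$ in which every $\vp_j$ is upper triangular, with diagonal entries $\lambda_{1,j},\dots,\lambda_{r,j}$. Setting $\omega_a\coloneqq\sum_j\lambda_{a,j}\,dz_j\in\Omega_{X,x}^1$ for $a=1,\dots,r$ produces the $r$ eigencovectors required by Definition \ref{d.spectralbase}. Because the $\vp_j$ are simultaneously triangular, the characteristic polynomial of the $\Omega_{X,x}^1$-valued endomorphism $\vp_x$ factors as $\prod_{a=1}^r(t-\omega_a)$ in $\Sym^{\bullet}\Omega_{X,x}^1[t]$; equivalently $\Tr(\vp_x^i)=\sum_{a=1}^r\omega_a^i$ in $\Sym^i\Omega_{X,x}^1$ for each $i$. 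Converting power sums to elementary symmetric functions by Newton's identities — a polynomial automorphism of $\MA^r_X$ valid in characteristic $0$ — identifies the value $\sh_X([(\msE,\vp)])(x)$ with $(\sigma_1(\omega_1,\dots,\omega_r),\dots,\sigma_r(\omega_1,\dots,\omega_r))$. By \cite[Theorem 4.1]{ChenNgo2020} this is exactly the image of the zero-cycle $[\omega_1+\dots+\omega_r]$ under the closed embedding of $\Chow^r(\Omega_{X,x}^1)$, so the fiberwise value lies in the Chow subvariety.

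To globalize, I would emphasize that the invariant polynomials $\Tr(\vp^i)$ are globally defined holomorphic sections, independent of the (possibly nonholomorphic, monodromy-permuted) choice of triangularizing basis; hence the section $\sigma\colon X\to\Tot(\Omega_X^1)\times_X\cdots\times_X\Tot(\Sym^r\Omega_X^1)$ attached to $\sh_X([(\msE,\vp)])$ is automatically holomorphic, and the pointwise computation above shows its image lands in $\Chow^r(\Tot(\Omega_X^1)/X)$ at every $x$. (If one prefers to avoid any special fibers, Lemma \ref{l.birationalinvarance} lets one verify this only over a dense Zariski open $X^{\circ}$.) Thus $\sh_X([(\msE,\vp)])\in\MS^r_X$ for every point of $\MMH^{\textup{stack},r}$; since $\iota_X$ is a closed immersion, the Hitchin morphism factors as $\sh_X=\iota_X\circ\ssd_X$, defining the spectral morphism $\ssd_X$.

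The main obstacle is making integrability do its work, rather than any single estimate: without $\vp\we\vp=0$ the characteristic polynomial of $\vp_x$ need not factor into genuine linear forms $\omega_a\in\Omega_{X,x}^1$. For instance, when $r=2$ the roots lie in $\Omega_{X,x}^1$ only if the discriminant $\sigma_1^2-4\sigma_2\in\Sym^2\Omega_{X,x}^1$ is a perfect square, which fails for a generic quadratic form once $\dim X\geq 2$. The commutativity forced by $\vp\we\vp=0$ is precisely what guarantees simultaneous triangularizability and hence the factorization, so isolating this as the pivot — and checking that the eigencovector construction is compatible with the scheme structure of $\MS^r_X$, not merely with its underlying set — is where the care is needed.
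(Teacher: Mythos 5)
Your proposal is correct and follows essentially the same route as the paper's own proof: use $\vp\we\vp=0$ to get a commuting family of endomorphisms at each point, simultaneously upper-triangularize, and read off the covectors $\omega_1,\dots,\omega_r$ from the diagonal so that the pointwise condition of Definition \ref{d.spectralbase} holds. If anything, you are more careful than the paper on two minor points it glosses over — the conversion between the power sums $\Tr(\vp^i)$ appearing in \eqref{e.hitchinmap} and the elementary symmetric polynomials required by Definition \ref{d.spectralbase} (via Newton's identities), and the globalization through the Chow-space embedding \eqref{e.closedembeddingChow^r} — but these are refinements of the same argument, not a different one.
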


 \begin{proof}
     Let $(\MSE, \vp)$ be a rank $r$ Higgs bundle with $\Tr(\vp^i) = s_i \in H^0(X, \Sym^i\Omega_X^1)$. Given an arbitrary point $x\in X$, let $dz^1, dz^2, \cdots, dz^n$ be a frame of $\Omega^{1}_X$ at $x \in X$. If we write $\vp(x) = \sum_{i=1}^n A_i dz^i$, then the condition $\vp \wedge \vp = 0$ implies that $[A_i, A_j] = 0$ for any $1\leq i\leq j$. Thus $A_i$'s can be simultaneously upper-triangularized and so is $\varphi(x)$ as an $r\times r$-matrix with values in one forms. In particular, after changing local coordinates, we may assume that $\varphi(x)$ is a upper triangular matrix and let $\omega_1,\dots \omega_r\in \Omega^1_{X,x}$ be its diagonal elements. By the definition of the Hitchin morphism, we have $s_i(x)=\sigma_i(\omega_1,\dots,\omega_r)$ and hence we are done.
 \end{proof}

 In \cite{ChenNgo2020}, T.~Chen and B.~Ng\^o conjectured that $\ssd_X$ is surjective. This conjecture has been confirmed in \cite{ChenNgo2020} and \cite{SongSun2024} for smooth projective surfaces, in \cite{heliu2023spectralvariety} for rank two case and studied in \cite{bolognese2023p} for abelian variety. However, {in general} the moduli stack $\MMH^{{\rm stack}, r}$ may be much larger than $\MMD^r$. We are in particular interested in the image of the restriction of the spectral morphism to the Dolbeault moduli space, which leads to the following definition.

 \begin{definition}
The Dolbeault spectral base $\MS^r_{X;\Dol}$ is defined to be the image $\ssd_X(\MMD^r)$.
\end{definition}

Clearly we have the natural inclusions $\MS^r_{X;\Dol} \subset \MS^r_X \subset \MA^r_X$ and both of them are strict in the general case (see \cite[Example 3.4]{heliu2023spectralvariety}). Moreover, since by Theorem 2.4 the restriction $\sh_X|_{\MMD^r}$ is proper, the Dolbeault spectral base $\MS^r_{X;\Dol}$ is a closed subset of $\MS^r_X$.

\subsection{Spectral variety and its decomposition}
\label{ss.spectralvariety}

Let $p:\Tot(\Omega_X^1)\rightarrow X$ be the natural projection. Given a spectral datum $\textbf{s}=(s_1,\dots,s_r)\in \MS^r_X$, the \emph{spectral variety $X_{\textbf{s}}$ corresponding to $\textbf{s}$} is the closed subscheme of $\Tot(\Omega_X^1)$ defined as follows
\[
X_{\textbf{s}}\coloneqq \{\lambda^r-s_1\lambda^{r-1}+\dots+(-1)^{r-1}s_{r-1}\lambda+(-1)^r s_r =0\},
\]
where $\lambda\in H^0(\Tot(\Omega_X^1),p^*\Omega_X^1)$ is the Liouville form and for any $1\leq i\leq r$, the term $s_i\lambda^{r-i}$ is regarded as an element of $H^0(\Tot(\Omega_X^1),\Sym^r p^*\Omega_X^1)$. In particular, the subscheme $X_{\textbf{s}}$ is locally defined by $\binom{n+r-1}{r-1}$ equations. To understand the spectral variety, we introduce the notion of multivalued holomorphic $1$-forms --- see \cite[Definition 5.8]{cadorel2022hyperbolicity}.

\begin{definition}
    Let $X$ be a projective manifold, and let $\{U_i\}$ be an open covering of $X$ in the Euclidean topology. A multivalued holomorphic $1$-form is a collection of multisets $\{\omega_{i1},\cdots,\omega_{ir}\}$ where $\omega_{il}\in H^0(U_i,\Omega_X^1|_{U_i})$ and over $U_i\cap U_j$, we have 
    $\{\omega_{i1},\cdots,\omega_{ir}\}=\{\omega_{j1},\cdots,\omega_{jr}\}$ counted with multiplicity. We write $[(\omega_1,\cdots,\omega_r)]$ to denote a multivalued holomorphic $1$-form. 
\end{definition}

Let $\widehat{X}_{\mbfs}$ be the reduced scheme underlying $X_{\mbfs}$; that is, $\widehat{X}_{\mbfs}$ is the same topological space as $X_{\mbfs}$, but with the reduced structure sheaf. Then the natural morphism $\pi:\widehat{X}_{\mbfs}\rightarrow X$ is surjective and finite. In particular, there exists a dense Zariski open subset $X^{\circ}$ of $X$ such that $\widehat{X}^{\circ}_{\mbfs}\coloneqq \pi^{-1}(X^{\circ})\rightarrow X_{\circ}$ is an unramified finite covering. Moreover, we can also write
\[
\widehat{X}^{\circ}_{\mbfs} = \bigcup_{k=1}^m \widehat{X}^{\circ}_{\mbfs,k}
\]
for the decomposition of $\widehat{X}^{\circ}_{\mbfs}$ into irreducible components. Since $\widehat{X}^{\circ}_{\mbfs}\rightarrow X^{\circ}$ is unramified, the decomposition above is actually a disjoint union. On the other hand, one can easily see that each irreducible component $\widehat{X}^{\circ}_{\mbfs,k}$ defines a multivalued holomorphic $1$-form $[(\omega_{1}^k,\dots,\omega^k_{r_k})]$ over $X^{\circ}$ whose local representatives have no multiple elements. 

Now we can define the multiplicity of $\widehat{X}^{\circ}_{\mbfs,k}$ in the spectral variety $X_{\mbfs}$. For any $k$ and $l$, we define the multiplicity $m(k,l)$ of the section $\omega^k_{l}$ to be its multiplicity as a root of the equation
\[
\lambda^r - s_1\lambda^{n-1} + \dots + (-1)^{n-1} s_{n-1} \lambda + (-1)^n s_n =0.
\]

\begin{lemma}
\it The multiplicity $m(k,l)$ is independent of $l$.
\end{lemma}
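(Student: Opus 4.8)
The plan is to show that the multiplicity function is locally constant on the unramified covering $\pi:\widehat{X}^{\circ}_{\mbfs}\to X^{\circ}$, and then to conclude from the connectedness of each irreducible component. First I would fix the local picture. Write $P(\lambda)=\lambda^r-s_1\lambda^{r-1}+\cdots+(-1)^rs_r$ for the characteristic polynomial, whose coefficients $s_i$ are \emph{global} sections of $\Sym^i\Omega_X^1$. For a point $x\in X^{\circ}$, the distinct roots of $P$ in $\Omega^1_{X,x}$ are exactly the points of the fiber $\pi^{-1}(x)\subset\widehat{X}^{\circ}_{\mbfs}$, and by definition $m(k,l)$ is the order of vanishing of $P(\lambda)$ at the root $\omega^k_l(x)$.

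Since $\pi$ is unramified, over a small Euclidean neighborhood $U$ of $x$ the covering splits as a disjoint union of sheets, each given by a holomorphic section $\omega_j\colon U\to\Tot(\Omega_X^1)$, and these sheets remain pairwise distinct over all of $U$. Hence on $U$ the polynomial factors as $P(\lambda)=\prod_j(\lambda-\omega_j)^{m_j}$ with a fixed collection of exponents $m_j$, and the exponent attached to any given sheet cannot jump as $x$ varies in $U$. Therefore the multiplicity defines a locally constant $\mbZ$-valued function on $\widehat{X}^{\circ}_{\mbfs}$. Because each $\widehat{X}^{\circ}_{\mbfs,k}$ is connected, this function is constant on $\widehat{X}^{\circ}_{\mbfs,k}$. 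The roots $\omega^k_1,\dots,\omega^k_{r_k}$ over a fixed point $x$ are, by construction, precisely the local branches of the single component $\widehat{X}^{\circ}_{\mbfs,k}$, so they all lie in one connected component and thus share the same multiplicity; that is, $m(k,l)$ is independent of $l$. Equivalently, one may phrase this via monodromy: analytic continuation along loops in $X^{\circ}$ permutes the branches of $\widehat{X}^{\circ}_{\mbfs,k}$ transitively by connectedness, while leaving $P$---and hence the multiplicity of each of its roots---invariant, so all branches in a component carry equal multiplicity.

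The only delicate point is the local constancy of the multiplicity, and it rests entirely on $\pi$ being unramified over $X^{\circ}$: this is exactly what guarantees that distinct roots never collide locally, so that the exponents in the local factorization of $P$ cannot change. Once this is established, connectedness of the components yields the statement at once, with no further estimate or case analysis required.
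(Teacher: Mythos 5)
Your proof is correct and follows essentially the same route as the paper's: both define the multiplicity as an integer-valued function on the unramified cover $\widehat{X}^{\circ}_{\mbfs}$, observe that it is locally constant, and conclude by connectedness of each component $\widehat{X}^{\circ}_{\mbfs,k}$. You in fact supply more detail than the paper (the local factorization with non-colliding sheets, plus the monodromy rephrasing), where the paper simply asserts local constancy from the definition.
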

\begin{proof}
    Define a function $\mu: \widehat{X}_{\bf s}^o \to \mathbb N$ as follows. Any point $w \in  \widehat{X}_s^o$ is a tangent covector at $x = \pi(w)$ of the form $w = \omega_j(x)$ for some $j$, $1 \le j \le r$.  Define now $\mu(w)$ to be the multiplicity of $\omega_j$ at $x \in X^o$.  From the definition of the multiplicity it follows that $\mu$ is locally constant on $\widehat{X}_{\bf s}^o$, hence it must be constant on each connected component 
$\widehat{X}_{{\bf s},k}^o$, $1 \le k \le m$.
\end{proof}

We shall denote $m(k,l)$ by $m(k)$. Then we have $\sum_{k=1}^m m(k)r_k=r$. Let $\widehat{X}_{\mbfs,k}$ be the closure of $\widehat{X}^{\circ}_{\mbfs,k}$ in $\widehat{X}_{\mbfs}$ and let $\hat{\pi}_k:\widehat{X}_{\mbfs,k}\rightarrow X$ be the natural morphism. We define
\[
\hat{\MSF}_k\coloneqq \hat{\pi}_{k*}\MSO_{\hat{X}_{\mbfs,k}}.
\]
Since $\hat{X}_{\mbfs,k}$ is integral, the structure sheaf $\MSO_{\hat{X}_{\mbfs,k}}$ is torsion-free. So $\hat{\MSF}_k$ is also torsion-free and it carries a natural Higgs field $\psi_k$ defined as follows:
\begin{equation}
\label{e.inducedHiggs}
    \psi_{k}: \hat{\MSF}_k=\hat{\pi}_{k*}\MSO_{\widehat{X}_{\mbfs,k}} \xrightarrow{\times \lambda} \hat{\pi}_{k*}\left(\MSO_{\widehat{X}_{\mbfs,k}}\otimes \hat{\pi}_k^*\Omega_X^1\right) = \hat{\MSF}_k\otimes \Omega_X^1.
\end{equation}

Recall that a coherent sheaf $\MSF$ over a complex manifold is said to be a \emph{normal sheaf} if Hartogs' extension across subvarieties of codimension $\geq 2$ holds, and $\MSF$ is said to be a \emph{reflexive sheaf} if $\MSF^{**}=\MSF$. By \cite[Chapter 2, Lemma 1.1.12]{OkonekSchneiderSpindler2011}, a coherent sheaf on a complex manifold is reflexive if and only if it is normal and torsion-free. 

\begin{proposition}
    Let $\MSE_k$ be the reflexive hull of $\hat{\MSF}_k$, i.e., $\MSE_k=\hat{\MSF}_k^{**}$.
    \begin{enumerate}
        \item The Higgs field $\psi_k$ extends to a Higgs field $\varphi_k$ on $\MSE_k$.

        \item The sheaf $\MSE_k$ carries a natural $\MSO_X$-algebra structure.
    \end{enumerate}
\end{proposition}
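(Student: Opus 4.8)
The plan is to prove both statements by the same mechanism: everything is already defined on a big open set, and the reflexivity of $\MSE_k$ lets me extend across the bad locus by Hartogs' theorem. First I would set $U := X\setminus Z$, where $Z$ is the support of the cokernel of the natural inclusion $\hat{\MSF}_k \hookrightarrow \MSE_k = \hat{\MSF}_k^{**}$; since $\hat{\MSF}_k$ is torsion-free on the smooth variety $X$, it is automatically locally free in codimension $1$, so $Z$ has $\codim_X Z\geq 2$ and $\hat{\MSF}_k|_U = \MSE_k|_U$. The tool I will use repeatedly is: for any coherent $\MSG$ and any reflexive $\MSF$ on the smooth $X$, the sheaf $\mathcal{H}om_{\MSO_X}(\MSG,\MSF)$ is reflexive, hence normal (via the cited \cite[Chapter 2, Lemma 1.1.12]{OkonekSchneiderSpindler2011} identification reflexive $=$ normal $+$ torsion-free); consequently any section of $\mathcal{H}om_{\MSO_X}(\MSG,\MSF)$ over $U$ extends uniquely to a section over $X$, i.e. any $\MSO_U$-morphism $\MSG|_U \to \MSF|_U$ extends uniquely to an $\MSO_X$-morphism $\MSG \to \MSF$. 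I will also use that $\MSE_k \otimes \Omega_X^1$ is reflexive, because tensoring a reflexive sheaf by a locally free sheaf preserves reflexivity.

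For (1): restrict the Higgs field $\psi_k$ of \eqref{e.inducedHiggs} to $U$ and transport it through $\hat{\MSF}_k|_U = \MSE_k|_U$ to obtain $\psi_k|_U \colon \MSE_k|_U \to (\MSE_k \otimes \Omega_X^1)|_U$. Viewing $\psi_k|_U$ as a section over $U$ of $\mathcal{H}om_{\MSO_X}(\MSE_k, \MSE_k\otimes \Omega_X^1)$ --- a reflexive sheaf by the previous paragraph --- I extend it uniquely to a morphism $\varphi_k\colon \MSE_k \to \MSE_k\otimes\Omega_X^1$ on all of $X$. It then remains to check integrability $\varphi_k\wedge\varphi_k=0$: this is a morphism $\MSE_k \to \MSE_k\otimes\Omega_X^2$ whose restriction to $U$ equals $\psi_k\wedge\psi_k=0$, and since its target is torsion-free and $U$ is dense, it vanishes identically. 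Hence $\varphi_k$ is a Higgs field extending $\psi_k$.

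For (2): the pushforward $\hat{\MSF}_k = \hat{\pi}_{k*}\MSO_{\widehat{X}_{\mbfs,k}}$ is tautologically a sheaf of commutative $\MSO_X$-algebras, with multiplication $m\colon \hat{\MSF}_k\otimes_{\MSO_X}\hat{\MSF}_k \to \hat{\MSF}_k$ and unit $\MSO_X \to \hat{\MSF}_k \hookrightarrow \MSE_k$. Restricting $m$ to $U$ and using $\hat{\MSF}_k|_U=\MSE_k|_U$ gives $m|_U\colon (\MSE_k\otimes\MSE_k)|_U \to \MSE_k|_U$, which I view as a section over $U$ of $\mathcal{H}om_{\MSO_X}(\MSE_k\otimes\MSE_k,\MSE_k)$; the target $\MSE_k$ is reflexive, so this $\mathcal{H}om$-sheaf is reflexive and $m|_U$ extends uniquely to $\tilde{m}\colon \MSE_k\otimes\MSE_k\to\MSE_k$. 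The associativity, commutativity and unit axioms are equalities of $\MSO_X$-morphisms out of $\MSE_k^{\otimes 2}$ and $\MSE_k^{\otimes 3}$ into the torsion-free sheaf $\MSE_k$; each holds on the dense open $U$ and therefore on $X$ by uniqueness of extension. This equips $\MSE_k$ with the desired commutative $\MSO_X$-algebra structure extending that of $\hat{\MSF}_k$.

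The steps are formal once the extension lemma is in place, so the only point requiring genuine care --- the main obstacle --- is the codimension bookkeeping: I must confirm that the locus where $\hat{\MSF}_k$ fails to agree with its reflexive hull $\MSE_k$ has $\codim \geq 2$, which is exactly what makes the Hartogs extensions of $\varphi_k$ and $\tilde{m}$ exist and be unique. A secondary point is verifying that $\mathcal{H}om_{\MSO_X}(-,\MSE_k)$ and $\MSE_k\otimes\Omega_X^1$ are reflexive; both are standard consequences of $\MSE_k$ being reflexive on a smooth variety, and once granted, the integrability of $\varphi_k$ and the algebra axioms for $\tilde{m}$ follow purely from their validity on the dense open set $U$.
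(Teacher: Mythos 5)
Your proof is correct and follows essentially the same route as the paper: both identify a Zariski open $U$ with $\codim(X\setminus U)\geq 2$ on which $\hat{\MSF}_k\cong\MSE_k$, and then extend the Higgs field and the algebra structure across $X\setminus U$ using the Hartogs-type extension property of reflexive (normal $+$ torsion-free) sheaves. Your version is slightly more explicit in spelling out the extension via reflexivity of the relevant $\mathcal{H}om$-sheaves and in checking the integrability condition and algebra axioms by density, details the paper leaves implicit, but the underlying argument is the same.
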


\begin{proof}
    There exists a dense Zariski open subset $\iota:U \hookrightarrow X$ such that $X\setminus U$ is of codimension $\ge 2$ in $X$ and such that $\hat{\MSF}_k$ is locally free on $U$, so that $\hat{\MSF}_k|_{U}\cong \MSE_k|_U$. Hence, $\mathscr{E}_k|_U$ inherits from $\hat{\MSF}_k$ the structure of an $\mathscr O_U$-algebra. On the other hand, since $\MSE_k$ is reflexive, we have 
    \[
    \MSE_k\cong \iota_*(\MSE_k|_U)=\iota_*(\hat{\MSF}_k|_U).
    \]
    In particular, the restriction
    \[
    \MSE_k|_U\cong \hat{\MSF}_k|_U \xrightarrow{\psi_k|_U} \hat{\MSF}_k|_U\otimes \Omega_U^1 \cong \MSE_k|_U\otimes \Omega_U^1
    \]
    extends to a morphism $\varphi_k:\MSE_k\rightarrow \MSE_k\otimes \Omega_X^1$ and  we have an extension of the structure of the $\MSO_U$-algebra structure on $\MSE_k|_U$ to an $\mathscr O_X$-algebra structure on $\MSE_k$. 
\end{proof}

\begin{remark}
    \label{r.CohenMacaulay}
    Let $\widetilde{X}_{k}$ be the variety defined as $\textup{Spec}_{\MSO_X}\MSE_k$. Then we have a natural finite birational morphism $\widetilde{X}_{\mbfs,k}\rightarrow \widehat{X}_{\mbfs,k}$, which is an isomorphism in codimension one. Moreover, since $\MSE_k$ is locally free in codimension two, the variety $\widetilde{X}_{\mbfs,k}$ is Cohen-Macaulay in codimension two (\cite[IV, D, Corollaire 2]{Serre1965}).  In particular, the variety $\widetilde{X}_{\mbfs,k}$ is Cohen-Macaulay if $\dim(X)=2$.
\end{remark}

Since $X$ is smooth and $\MSE_k$ is reflexive, there exists a dense Zariski open subset $X^{\circ\circ}$ of $X$ such that $X\setminus X^{\circ\circ}$ is of codimension $\geq 3$ in $X$ and such that the restriction $\MSE_k|_{X^{\circ\circ}}$ is locally free (\cite[Chapter 2, Lemma 1.1.10]{OkonekSchneiderSpindler2011}). Denote by $\varphi_k^{\circ\circ}$ the restriction $\varphi_k|_{X^{\circ\circ}}$ and let 
\[
(s_{1k}^{\circ\circ},\cdots,s_{r_k k}^{\circ\circ})\coloneqq \sh_{X^{\circ\circ}}\big([\MSE_k|_{X^{\circ\circ}},\varphi_k^{\circ\circ}]\big) \in \bigoplus_{i=1}^{r_k} H^0(X^{\circ\circ},\Sym^i\Omega_{X^{\circ\circ}}^1).
\]
Since $\codim(X\setminus X^{\circ\circ})\geq 3$ and $\Sym^i\Omega_X^1$ is locally free, the sections $s_{ik}^{\circ\circ}$ extend to sections $s_{ik}\in H^0(X,\Sym^i\Omega_X^1)$. Let
\[
P_k(T)\coloneqq T^{r_k} - s_{1k} T^{r_k-1} + \dots + (-1)^{r_k} s_{r_k k}
\]
{\color{black}be} the corresponding characteristic polynomial. Over the dense Zariski open subset $X^{\circ}$, one can easily derive the following equality of polynomials
\begin{equation}
\label{e.decomp-char-poly}
   P(T)\coloneqq T^r - s_1 T^{r-1} + \cdots + (-1)^k s_k = \prod_{k=1}^m P_k(T)^{m(k)}. 
\end{equation}
Then it follows that the equality above actually holds over the whole $X$ by comparing the coefficients. Now let $X_{\mbfs,k}\subset \Tot(\Omega_X^1)$ be the subscheme defined by $P_k(\lambda)=0$. Then clearly we have $X_{\mbfs,k}\cap p^{-1}(X^{\circ})=\widehat{X}^{\circ}_{\mbfs,k}$ and we may also write the equality \eqref{e.decomp-char-poly} as
an equation on cycles in the form

\begin{equation}
\label{e.decomp-spectral-var}
    [X_{\mbfs}] = \sum_{k=1}^m m(k) [X_{\mbfs,k}],
\end{equation}
where for a pure-dimensional complex subspace $A$ of $X$, $[A]$ denotes the cycle in the Chow space Chow$(X)$ associated to $A$.
\begin{remark}
    An analogue of the decomposition \eqref{e.decomp-spectral-var} was obtained by L.~Song and H.~Sun in \cite[\S\,3]{SongSun2024} in the case where $X$ is a smooth projective surface and in higher dimension one can use \eqref{e.decomp-spectral-var} to reduce Chen--Ng\^o's conjecture to the case where the spectral cover $X_{\textbf{s}}$ is irreducible and generically reduced, i.e., $m=1$ and $m(1)=1$.
\end{remark}

The Hitchin morphism \eqref{e.hitchinmap} and the spectral morphism \eqref{e.spectralmorphism} can be directly extended to Higgs sheaves and one can immediately derive the following result from our argument above.

\begin{proposition}
\label{p.SurjReflexive}
    Let $X$ be a projective manifold. Given a spectral datum $\mbfs\in \MS_X^r$, there exists a reflexive Higgs sheaf $(\MSE,\varphi)$ of rank $r$ over $X$ such that $\ssd_X([(\MSE,\varphi)])=\mbfs$.
\end{proposition}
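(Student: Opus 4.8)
The plan is to build the desired sheaf directly out of the reflexive Higgs sheaves $(\MSE_k,\varphi_k)$ produced above, each taken with the multiplicity $m(k)$ recorded in the cycle decomposition \eqref{e.decomp-spectral-var}. Explicitly, I would set
\[
(\MSE,\varphi)\coloneqq \bigoplus_{k=1}^m (\MSE_k,\varphi_k)^{\oplus m(k)}
\]
and then verify the three required properties: that $\MSE$ is reflexive of rank $r$, that $\varphi$ is a Higgs field, and that $\ssd_X([(\MSE,\varphi)])=\mbfs$.

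For the first point, each $\MSE_k=\widehat{\MSF}_k^{**}$ is reflexive by construction, and a finite direct sum of reflexive sheaves is reflexive because forming the double dual commutes with finite direct sums; its rank is $\sum_{k=1}^m m(k)\,r_k=r$ by the relation established before the statement. For the second point, $\varphi$ is block-diagonal with blocks the Higgs fields $\varphi_k$, each of which satisfies $\varphi_k\wedge\varphi_k=0$ as already shown; since the off-diagonal contributions to $\varphi\wedge\varphi$ vanish for a block-diagonal field, we obtain $\varphi\wedge\varphi=0$, so $(\MSE,\varphi)$ is a reflexive Higgs sheaf.

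For the last and decisive point, I would use that the Hitchin and spectral morphisms extend to reflexive Higgs sheaves, together with the fact that the characteristic polynomial of a direct sum is the product of the characteristic polynomials of the summands. By construction the characteristic polynomial of $(\MSE_k,\varphi_k)$ is $P_k(T)$, so the characteristic polynomial of $(\MSE,\varphi)$ is $\prod_{k=1}^m P_k(T)^{m(k)}$, which equals $P(T)=T^r-s_1T^{r-1}+\cdots+(-1)^rs_r$ by \eqref{e.decomp-char-poly}. Hence $\sh_X([(\MSE,\varphi)])=\mbfs$, and since $\sh_X=\iota_X\circ\ssd_X$ with $\iota_X$ injective, we conclude $\ssd_X([(\MSE,\varphi)])=\mbfs$.

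The step that needs care, more a bookkeeping issue than a genuine obstacle, is that $\MSE$ is reflexive but in general not locally free, so its Hitchin image is defined a priori only over the locus where $\MSE$ is locally free (whose complement has codimension $\geq 3$) and is then extended by Hartogs; one must check that the equality of characteristic polynomials, valid over the dense open $X^\circ$, propagates to all of $X$. This is precisely guaranteed by \eqref{e.decomp-char-poly}, whose global validity was obtained by comparison of coefficients, so the verification goes through without additional work.
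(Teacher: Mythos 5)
Your proposal is correct and coincides with the paper's own proof, which simply sets $(\MSE,\varphi)=\bigoplus_{k=1}^m (\MSE_k,\varphi_k)^{\oplus m(k)}$ and relies on the preceding construction of the $(\MSE_k,\varphi_k)$, the relation $\sum_k m(k)r_k=r$, and the factorization \eqref{e.decomp-char-poly} exactly as you do. The verifications you spell out (reflexivity of the direct sum, the block-diagonal Higgs condition, and propagation of the characteristic-polynomial identity from $X^\circ$ to $X$) are precisely the details the paper leaves implicit.
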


\begin{proof}
    We conclude by letting $(\MSE,\varphi)=\oplus_{k=1}^m (\MSE_k,\varphi_k)^{\oplus m(k)}$.
\end{proof}

\begin{remark}
    
\begin{enumerate}
    \item Since reflexive sheaves are locally free in codimension two (\cite[Chapter 2, Lemma 1.1.10]{OkonekSchneiderSpindler2011}), one can use Proposition \ref{p.SurjReflexive} to recover the surjectivity of the spectral morphism $\ssd_X$ proved in \cite{ChenNgo2020} and \cite{SongSun2024} in the surface case.

    \item Assume that the spectral variety $X_{\mbfs}$ is irreducible and generically reduced. As $\pi\colon X_{\mbfs}\rightarrow X$ is finite, the natural surjection $\MSO_{X_{\mbfs}}\rightarrow \MSO_{\hat{X}_{\mbfs}}$ induces a surjection
    \[
    \MSF\coloneqq \pi_*\MSO_{X_{\mbfs}} \rightarrow \hat{\pi}_* \MSO_{\hat{X}_{\mbfs}} \eqqcolon \hat{\MSF},
    \]
    where $\pi\colon X_{\mbfs}\rightarrow X$ and $\hat{\pi}\colon \hat{X}_{\mbfs}\rightarrow X$ are the natural finite morphisms, respectively. Moreover, since $\MSF\rightarrow \hat{\MSF}$ is an isomorphism over the generic point of $X$ and $\hat{\MSF}$ is torsion-free, we must have
    \[
    \hat{\MSF}\cong \MSF/\MST(\MSF),
    \]
    where $\MST(\MSF)$ is the torsion subsheaf of $\MSF$. In particular, we have $\hat{X}_{\mbfs}=\textup{Spec}_{\MSO_X} \hat{\MSF}/\MST(\MSF)$. This construction has already appeared in \cite[Remark 7.1]{ChenNgo2020} and in some special case the sheaf $\hat{\MSF}$ is already locally free, and hence reflexive --- see \cite[Example 8.1]{ChenNgo2020}. In particular, our construction of $\widetilde{X}_{\mbfs}$ can be viewed as a generalisation of that given in \cite[Remark 7.1]{ChenNgo2020}.
		
    \item Assume that $\dim(X)=2$, and $X_{\mbfs}$ is irreducible and generically reduced. Then the reflexive hull $\MSE\coloneqq \hat{\MSF}^{**}$ is locally free. So the variety $\widetilde{X}_{\mbfs}\coloneqq \textup{Spec}_{\MSO_X}\MSE$ is a finite Cohen-Macaulayfication of $X_{\mbfs}$ (Remark \ref{r.CohenMacaulay} and \cite[IV, D, Corollaire 2]{Serre1965}). On the other hand, T.~Chen and B.~Ng\^{o} has also constructed a Cohen-Macaulayfication $X^{\textup{CM}}$ of $X_{\mbfs}$ in \cite[Proposition 7.2]{ChenNgo2020} via the Hilbert scheme. Now we claim that $\widetilde{X}_{\mbfs}$ is actually isomorphic to $X^{\textup{CM}}$. Indeed, let $U$ be the largest open subset of $X$ such that $\hat{\MSF}|_U$ is locally free. Then $X\setminus U$ has codimension $\geq 2$ in $X$ and we have 
    \[
    \MSF|_U \cong \MSE|_U \cong \MSE'|_U
    \]
    as $\MSO_U$-algebras (\cite[Proposition 7.2 and Remark 7.1]{ChenNgo2020}), where $\MSE'\coloneqq \pi'_*\MSO_{X^{\textup{CM}}}$ and $\pi'\colon X^{\textup{CM}}\rightarrow X$ is the natural finite morphism. Since both $\MSE$ and $\MSE'$ are locally free, we get an isomorphism of $\MSE$ and $\MSE'$ as $\MSO_X$-algebras. Hence, there exists an isomorphism between $\widetilde{X}_{\mbfs}$ and $X^{\textup{CM}}$ satisfying the following commutative diagram:
    \[
    \begin{tikzcd}
        \widetilde{X}_{\mbfs} \arrow[dr] \arrow[rr,"\cong"]
            & 
                & X^{\textup{CM}} \arrow[dl] \\
            & \hat{X}_{\mbfs}
                &
    \end{tikzcd}
    \]
\end{enumerate} 
\end{remark}

%Let $(\MSE, \vp)$ be a rank $r$ Higgs bundle with $\Tr(\vp^i) = s_i \in H^0(X, \Sym^i\Omega_X^1)$. Let $dz_1, dz_2, \cdots, dz_n$ be a frame of $\Omega^{1}_X$ at $x \in X$. If we write $\vp(x) = \sum_{i=1}^n B_i dz_i$, then the condition $\vp \wedge \vp = 0$ implies that $[B_i, B_j] = 0$. Therefore, $B_1, \cdots, B_n$ can be simultaneously diagonalized. If we write $\{\lambda_{i1}, \cdots, \lambda_{ir}\}$ as a collection of eigenvalues of $B_i$, then we can define $v_i := \sum_{j=1}^n \lambda_{ij} dz_j$, and $\{v_1, \cdots, v_r\}$ are compactible in the overlap of different coverings. Consequently, we obtain a multivalued holomorphic 1-form $[(v_1, \cdots, v_r)]$ such that $\Xi([(v_1, \cdots, v_r)]) = \sh_X(\MSE, \vp)$.

\section{The spectral base for a quotient of a bounded symmetric domain with $\rank \geq 2$.}
Let $X$ be a quotient of a bounded symmetric domain by an irreducible torsion free cocompact lattice. In this section, we will explain the relationship between the spectral base and {\color{black}Finsler metrics}. Moreover, we will use the Finsler metric rigidity theorem of the last author \cite{mok2004extremal} to prove the vanishing of the spectral base whenever ${\rm rank}(X) \ge 2$.

\subsection{Finsler (pseudo-)metric}
Let $\MSL$ be a holomorphic line bundle over a complex manifold $X$. We briefly recall the definition of a (singular) Hermitian metric on $\MSL$.

\begin{definition}
    A singular (Hermitian) metric $h$ on a line bundle $F$ is a metric which is given in any trivialization $\theta:L|_{U}\xrightarrow{\cong} U\times \mbC$ by
    \[
    \|\xi\|_{h} = |\theta(\xi)| e^{-\varphi(x)}, x\in U, \xi\in \MSL_x,
    \]
    where $\varphi\in L^1_{\textup{loc}}(U)$ is an arbitrary locally integrable function, called the weight of the metric with respect to the trivialization $\theta$.
\end{definition}

    The \emph{curvature current} of $\MSL$ is given formally by the closed $(1,1)$-current $\frac{\sqrt{-1}}{2\pi}\Theta_{\MSL,h}=dd^c\varphi$ on $U$. The assumption $\varphi\in L^1_{\textup{loc}}$ guarantees that $\Theta_{\MSL,h}$ exists in the sense of distribution theory. Moreover, for the curvature current for is globally defined over $X$ and independent of the choice of trivialisations, and its de Rham cohomology class is the image of the first Chern class $c_1(\MSL)\in H^2(X,\mbZ)$ in $H^2_{\textup{dR}}(X,\mbR)$. If we assume in addition that $\varphi\in \mathcal{C}^{\infty}(U,\mbR)$, then $h$ is the usual smooth Hermitian metric on $\MSL$.

\begin{example}
\label{l.Current-Global-section}
    Let $D$ be an effective divisor and let $\MSL=\MSO_X(-D)$ be the ideal sheaf of $D$. Let $\MSL \rightarrow \MSO_X$ be the natural non-zero map to the trivial line bundle $\MSO_X$ over $X$. Then the standard Hermitian metric over $\MSO_X$ induces a singular Hermitian metric $h$ over $\MSL$. Indeed, let $g$ be the generator of $\MSO_X(-D)$ on an open subset $U$ of $X$, then 
    \[
    \theta(u)=\frac{u}{g}
    \]
    defines a trivialisation of $\MSO_X(-D)$ over $U$, thus our singular metric is associated to the weight $\varphi=-\log|g|$. By the Lelong--Poincar{\'e} equation, we find
    \[
    \frac{i}{2\pi} \Theta_{\MSL} = dd^{c}\varphi = -[D],
    \]
    where $[D]$ denotes the current of integration over $D$.
\end{example}

Let $\MSE$ be a holomorphic vector bundle over a complex manifold $X$. Let $\mbP(\MSE)$ be the projectivisation in the geometric sense, i.e., $\mbP(\MSE)$ parametrises the one-dimensional linear subspaces contained in the fibres of $\MSE$. Let $\MSO_{\mbP(\MSE)}(-1)\subset \pi^*\MSE$ be the dual tautological line bundle over $\mbP(\MSE)$, where $\pi:\mbP(\MSE)\rightarrow X$ is the natural projection. Given a (singular) Hermitian metric $h$ over $\MSO_{\mbP(\MSE)}(-1)$, we can define a pseudo-metric $\bar{h}$ over $\MSE$ as in the following. For any $v\in \MSE_x\setminus\{0\}$, we define
\[
\|v\|_{\bar{h}} : = \|v\|_{h},
\]
where on the right-hand side we regard $v$ as the corresponding point in the fibre of the natural projection $\MSO_{\mbP(\MSE)}(-1)\rightarrow \mbP(\MSE)$ over $[v]$. Such a metric $\bar{h}$ is called a \emph{(complex) Finsler pseudo-metric} and we call it a \emph{(complex) Finsler metric} if the metric $h$ is a smooth Hermitian metric.

%To begin with, we first introduce the concept of complex Finsler metric. Let $\msE$ be a holomorphic vector bundle over a projective variety $X$, let $\msE^{\ti}$ be the complement of the zero section of $\msE$. 
%\begin{definition}
%A (complex) Finsler pseudometric $h$ on $\msE$ is an upper semi continuous function $h:\msE\to \mbR_{\geq 0}$ satisfying the condition $h_x(tv)=|t|h_x(v)$ for all $t\in \mbC$ and $v\in \MSE_x$.
%In addition, if $h_x(v)>0$ for any $v\in E_x^\ti$, then $h$ is called a Finsler metric.
%\end{definition}
%Note that it is important to have the smoothness of $h^2$ only on $\msE^{\ti}.$ If $h^2$ is smooth in the whole $\msE$, then $h$ is the norm associated to a Hermitian metric. 

%Let $\mbP(\msE)$ be the projective bundle associated with $E$ with $\mbL(\msE)\subset \mbP(\msE)\ti E$ be the tautological line bundle over $\mbP(\msE)$, then $\msE^{\ti}$ is biholomorphic to $\mbL(\msE)^{\ti}$ with the map given by 
%$$
%\xi: \msE^{\ti}\in v\to ([v],v)\in \mbP(\msE)\ti \msE^{\ti}.
%$$
%By \cite{kobayashi1975negative}, $\xi$ induces an bijection between the set of Finsler (pseduo)metric on $\msE$ and the (pseduo)Hermitian metric on $\mbL(\msE)$. 

%\begin{definition}
    %A Finsler pseduo metric $h$ on $\msE$ is called non-positive 
    %if the corresponding Hermitian metric, which we denote as $h_{\xi}$, on $\mbL(\msE)$ is non-positive, cf. the curvature defined by $h_{\xi}$ is non-positive
    %$$
    %\frac{\sqrt{-1}}{2\pi}\bar{\pa}\pa \log h_{\xi}
    %$$
    %as a $(1,1)$-current.
%\end{definition}

\subsection{Bounded symmetric domain and Finsler metric rigidity}

  We collect some basic definitions and facts about bounded symmetric domains and we refer the interested reader to \cite{Mok1989} for more details. Let $\Omega\Subset \mbC^n$ be a bounded domain in a complex Euclidean space. We say that $\Omega$ is a \emph{bounded symmetric domain} if and only if at each $x\in \Omega$, there exists a biholomorphism $\sigma_x:\Omega\rightarrow \Omega$ such that $\sigma_x^2=\id$ and $x$ is an isolated fixed point of $\sigma_x$. In this case, the Bergman metric $ds_{\Omega}^2$ with K\"ahler form $\omega$ on $\Omega$ is K{\"a}hler--Einstein and $(\Omega,\omega)$ is a Hermitian symmetric space of {\color{black}the} non-compact type. The \emph{rank} of $\Omega$ is defined to be the rank of $(\Omega,ds_{\Omega}^2)$ as a Riemannian symmetric manifold. We say that the bounded symmetric domain $\Omega$ is \emph{irreducible} if and only if $(\Omega,ds^2_{\Omega})$ is an irreducible Riemannian symmetric manifold. Denote by $\Aut(\Omega)$ the {\color{black} group of} biholomorphic self-mappings on $\Omega$. Write $G$ for the identity component $\Aut_o(\Omega)$ and let $K\subset G$ be the isotropy subgroup at a point $o\in \Omega$, so that $\Omega=G/K$ as a homogeneous space. 

  Now we introduce the minimal characteristic bundle --- see \cite[Chapter 6,\S\,1]{Mok1989} and \cite{Mok2002a}. Let $\Omega$ be an irreducible bounded symmetric domain. Then we can identify $\Omega$ as a subdomain of its compact dual $M$ by the Borel embedding (\cite[Chapter 3, \S\,3]{Mok1989}). Let $G^{\mbC}$ be the automorphism group of $M$ and $P\subset G$ be the isotropy subgroup at $o$. Then $G^{\mbC}\supset G$ is a complexification of $G$. Consider the action of $P$ on $\mbP(T_o M)$. There are exactly $r$ orbits $\MO_k\subset \mbP(T_o M)=\mbP(T_o\Omega)$, $1\leq k\leq r$, such that the topological closures $\bar{\MO}_k$ form an ascending chain of subvarieties of $\mbP(T_o M)$ with $\bar{\MO}_r=\mbP(T_o M)$. In particular, the variety $\MO_1$ is the unique closed orbit, which is thus a homogeneous projective submanifold of $\mbP(T_o M)$. Moreover, the submanifold $\MO_1\subset \mbP(T_o M)$ is nothing {\color{black} other than} the \emph{variety of minimal rational tangents} (VMRT) of $M$ at $o$, i.e., the variety of tangent directions {\color{blue} at $o$} of projective lines on $M$ passing through $o$ with respect to the first canonical projective embedding of $M$. In particular, the subvariety $\MO_1\subset \mbP(T_o M)$ is linearly non-degenerate, i.e., not contained in a hyperplane. The $G$-orbit $\MS(\Omega)$ of a point $0\not=[\eta]\in \MO_1$ is a holomorphic bundle of homogeneous projective manifolds over $\Omega$, which is called the \emph{minimal characteristic bundle} of $\Omega$.

   Let $\Omega$ be an arbitrary bounded symmetric domain{\color{black}, and write} $\Omega=\Omega_1\times\cdots\times \Omega_m$ {\color{black} for its} decomposition into {\color{black} the Cartesian product of its} irreducible factors.  Write $T\Omega=T_1\oplus \cdots\oplus T_m$ {\color{black} for} the corresponding direct sum decomposition of the holomorphic tangent bundle. Let us denote by $\MS^i(\Omega)\subset \mbP(T{\Omega})$ the holomorphic bundle over $\Omega$ obtained from the natural embedding of $\MS(\Omega_i)\subset \mbP(T\Omega_i)$ into the projective subbundle $\mbP(T_i)\subset \mbP(T\Omega)$. Let $X=\Omega/\Gamma$ be the quotient space by a torsion-free irreducible cocompact lattice. Then the Bergman metric $ds^2_{\Omega}$ on $\Omega$ descends to a quotient metric $g$ on $X$, which is again a K{\"a}hler-Einstein metric. We have the following Finsler metric rigidity theorem on $X$ proved by the last author in \cite{mok2004extremal}.

\begin{theorem}
[\protect{\cite[Theorem and Remarks]{mok2004extremal}}]
\label{thm_Mok_rigidity}
Let $\Omega=\Omega_1\times \cdots \times \Omega_m$ be a bounded symmetric domain of rank $\geq 2$ {\color{black}together with its} decomposition into irreducible factors. Let $\Gamma\subset \Aut(\Omega)$ be a torsion-free irreducible cocompact lattice and set $X{\color{black}:}=\Omega/\Gamma$. Let $g$ be the canonical K\"ahler--Einstein metric on $X$, and let $h$ be a continuous complex Finsler pseudo-metric on $X$ such that the curvature current of the associated possibly singular continuous Hermitian metric on the line bundle $\MSO_{\mbP(TX)}(-1)$ is non-positive. {\color{black} Denote by} $\|\cdot\|_g$ (resp. $\|\cdot\|_h$) lengths of vectors measured with respect to $g$ (resp. $h$). Then there exist non-negative constants $c_1,\cdots,c_m$ such that for any $v\in TX$ that can be lifted to a vector $v'\in T_i$ with $[v']\in \MS^i(\Omega)$, $1\leq i\leq m$, we have $\|v\|_h=c_i\|v\|_g$.
\end{theorem}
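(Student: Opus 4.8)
The plan is to translate the statement into the constancy of a single comparison function on the (compact) characteristic bundle, and then to force that constancy from a one-sided curvature inequality together with the extremal curvature behaviour of the canonical metric $g$ along characteristic directions; the rank $\geq 2$ hypothesis enters decisively at the final propagation step.

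First I would introduce the comparison function. Since $h$ is a continuous Finsler pseudo-metric and $g$ a smooth Hermitian metric, both induce continuous (possibly singular) Hermitian metrics $\hat h,\hat g$ on the tautological line bundle $\MSO_{\mbP(TX)}(-1)$, and the ratio
\[
u:=\log\frac{\|\cdot\|_{\hat h}}{\|\cdot\|_{\hat g}}
\]
is a well-defined, upper semicontinuous, $[-\infty,\infty)$-valued function on $\mbP(TX)$. The assertion of the theorem is precisely that on each projectivised characteristic bundle $\mathcal{S}^i:=\MS^i(\Omega)/\Gamma\subset \mbP(TX)$ the function $u$ equals the constant $\log c_i$; the value $c_i=0$, i.e. $u\equiv-\infty$ on $\mathcal{S}^i$, is permitted because $h$ is only a pseudo-metric. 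I would then record that the two induced metrics are related, as $(1,1)$-currents, by $\Theta_{\hat h}=\Theta_{\hat g}-dd^c u$, so that the curvature hypothesis $\Theta_{\hat h}\le 0$ becomes the one-sided inequality $dd^c u\ge \Theta_{\hat g}$.

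Next comes the geometric heart: the computation of $\Theta_{\hat g}$ restricted to $\mathcal{S}^i$. Here I would use that the curvature tensor of the locally symmetric metric $g$ is parallel and that the characteristic vectors are exactly the minimal rational tangents recalled above, i.e. the directions realising the extremal (minimal) holomorphic sectional curvature of $g$. The goal is to show that the restriction of $\Theta_{\hat g}$ to the subspace of $T\mathcal{S}^i$ transverse to the tautological/characteristic foliation is definite of a fixed sign, with kernel exactly the leaf directions. This is where the geometry of $\mathcal{S}^i$ as the bundle of VMRTs and the rank-$\geq 2$ structure are used, and it is the step I expect to be the main obstacle: pinning down the precise signature of $\Theta_{\hat g}|_{\mathcal{S}^i}$ from the extremal holomorphic-sectional-curvature characterisation, and carrying this out for a merely continuous metric $h$, so that $\Theta_{\hat h}$ is a genuine current and $u$ is only upper semicontinuous (and possibly $-\infty$ on a pluripolar set, which must be shown not to affect the subsequent integration by parts).

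With this curvature information in hand I would run an integral/maximum-principle argument on the compact manifold $\mathcal{S}^i$. Pairing the inequality $dd^c u\ge \Theta_{\hat g}$ with a suitable closed smooth form built from a Kähler class of $\mathcal{S}^i$ and invoking Stokes' theorem — legitimate since $u|_{\mathcal{S}^i}$ is quasi-plurisubharmonic, hence integrable, whenever it is not identically $-\infty$ — yields $\int_{\mathcal{S}^i}dd^c u\wedge(\cdots)=0$; comparing this with the fixed-sign curvature term forces the inequality to be an equality and $u$ to be maximal/harmonic along the characteristic leaves. A maximum-principle argument then makes $u$ constant along each compact projective fibre $\MO_1$, so that $u$ descends to a function on $X$. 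Finally I would propagate this fibrewise constancy to global constancy on $\mathcal{S}^i$ using the rank-$\geq 2$ hypothesis: through each characteristic vector passes a totally geodesic polydisk (a higher-rank flat), and the induced foliation of $\mathcal{S}^i$ has leaves whose closures are dense by the ergodicity coming from the irreducibility of $\Gamma$; a bounded function that is constant along fibres and harmonic along these leaves is thereby forced to be globally constant, giving $u\equiv\log c_i$ with $c_i\ge 0$. This is exactly the step that collapses in rank $1$, i.e. for the ball $\mbfB^n_{\mbC}$, consistent with the fact that the theorem cannot be strengthened to that case.
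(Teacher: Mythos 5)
You should first note a structural point: the paper does not prove Theorem \ref{thm_Mok_rigidity} at all. It is quoted verbatim, with citation, from \cite{mok2004extremal} and used in this article purely as a black box in the proof of Theorem \ref{t.SpBaseArithVar}. So the only meaningful comparison is with the proof in the cited reference, not with anything in this paper.

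Measured against that proof, your outline reproduces the correct global strategy --- the comparison function $u=\log\bigl(\|\cdot\|_{\hat h}/\|\cdot\|_{\hat g}\bigr)$ on $\mbP(TX)$, restriction to the compact characteristic bundle, an integral formula, constancy along a foliation, and density of leaves via irreducibility --- but the decisive step is precisely the one you leave open, and two of your mechanisms are misplaced. First, the signature claim: what the argument of \cite{mok2004extremal} actually uses is not the extremality of holomorphic \emph{sectional} curvature but the vanishing of holomorphic \emph{bisectional} curvature: the closed form $c_1(\MSO_{\mbP(TX)}(1),\hat g)$ restricted to $\MS^i$ is semi-positive with a nontrivial null distribution, the null directions at a characteristic $[v']$ coming from the vectors $\beta$ with $R_{v'\bar v'\beta\bar\beta}=0$. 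The existence of such null pairs is exactly where rank $\geq 2$ enters; in rank $1$ (the ball) the form is positive definite, the kernel is trivial, and the integral formula yields nothing --- so the rank hypothesis does its work at the curvature step, not (as you place it) only at the final propagation step. Second, your fibrewise maximum principle cannot run as stated: along the fibre $\MO_1$ the curvature of $\hat g$ on $\MSO_{\mbP(TX)}(-1)$ is Fubini--Study negative, so $dd^c u\ge\Theta_{\hat g}$ carries no usable sign in fibre directions. In the actual proof one instead pairs $dd^c u$ with an appropriate wedge power of the degenerate semi-positive form (together with powers of $\pi^*\omega$) over the compact $\MS^i$, applies Stokes, and concludes that $u$ is pluriharmonic, hence (being bounded above and continuous) constant, along the leaves of the null foliation --- which are tautological lifts of characteristic subdomains, not arbitrary maximal polydisks; constancy on all of $\MS^i$ then follows from density of leaves (Moore ergodicity for the irreducible lattice $\Gamma$) and continuity of $u$, with $c_i=0$ allowed since $h$ is only a pseudo-metric. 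Since you yourself flag the curvature-signature computation as ``the main obstacle'' without resolving it, the proposal as it stands is an accurate roadmap of \cite{mok2004extremal} rather than a proof.
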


\subsection{Rigidity and Integrality}

 We will apply Finsler metric rigidity to study the rigidity and integrality of irreducible compact quotients of bounded symmetric domain of complex dimension $\geq 2$. We start with the following easy lemma from linear algebra.

\begin{lemma}
    \label{l.zeros-spectral-data}
    Fix a positive integer $n$ and denote by $\sigma_1,\cdots,\sigma_n$ the elementary symmetric polynomials in $n$ variables. Let $V$ be a complex vector space of dimension $r$. Let $L_1,\cdots,L_n$ be $n$ ({\color{black} possibly} non-distinct) elements in the dual space $V^*$ and denote $\sigma_k(L_1,\cdots,L_n)$ by $P_k$, $1\leq k\leq n$. Then we have $\mbfB(L_1,\cdots,L_n)=\mbfB(P_1,\cdots,P_k)$, where
    \[
    \mbfB(L_1,\cdots,L_n)\coloneqq \{v\in V \,|\, L_k(v)=0,\,1\leq k\leq n\}
    \]
    and
    \[
    \mbfB(P_1,\cdots,P_n)\coloneqq \{v\in V\,|\,P_k(v)=0,\,1\leq k\leq n\}.
    \]
\end{lemma}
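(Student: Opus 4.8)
The plan is to prove the two inclusions separately. The inclusion $\mbfB(L_1,\cdots,L_n)\subseteq \mbfB(P_1,\cdots,P_n)$ is immediate, while the reverse inclusion rests on the elementary fact from Vieta's formulas that a monic degree-$n$ polynomial whose lower-order coefficients all vanish is equal to $T^n$, and hence has all its roots equal to zero.

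First I would dispose of the easy direction. Suppose $v\in V$ satisfies $L_k(v)=0$ for all $1\le k\le n$. Each $P_k=\sigma_k(L_1,\dots,L_n)$ is a homogeneous polynomial of degree $k$ in the linear forms $L_i$ with no constant term, so evaluating at $v$ gives $P_k(v)=\sigma_k\big(L_1(v),\dots,L_n(v)\big)=\sigma_k(0,\dots,0)=0$. Thus $v\in \mbfB(P_1,\dots,P_n)$, proving $\mbfB(L_1,\cdots,L_n)\subseteq \mbfB(P_1,\cdots,P_n)$.

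The substance is the reverse inclusion. I would fix $v\in \mbfB(P_1,\dots,P_n)$ and set $a_k:=L_k(v)\in \mbC$ for $1\le k\le n$. Since each $L_i$ is linear and $\sigma_k$ is computed as a sum of products of its arguments, evaluation at $v$ commutes with these operations, so $P_k(v)=\sigma_k(a_1,\dots,a_n)$; by hypothesis this equals $0$ for every $k$. I would then form the monic polynomial
\[
Q(T):=\prod_{i=1}^n (T-a_i)=T^n-\sigma_1(a_1,\dots,a_n)T^{n-1}+\cdots+(-1)^n\sigma_n(a_1,\dots,a_n),
\]
whose non-leading coefficients are, up to sign, exactly the scalars $P_k(v)$. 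Their simultaneous vanishing forces $Q(T)=T^n$, and hence every root $a_i$ must be zero. Therefore $L_k(v)=a_k=0$ for all $k$, so $v\in \mbfB(L_1,\dots,L_n)$, which completes the argument.

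The only point requiring a moment of care is the identity $P_k(v)=\sigma_k\big(L_1(v),\dots,L_n(v)\big)$, i.e.\ that evaluating the symmetric polynomial in the forms $L_i$ at the vector $v$ agrees with evaluating the symmetric polynomial at the scalars $L_i(v)$; this is a formal consequence of the linearity of the $L_i$ and presents no genuine obstacle. The conceptual heart of the lemma is simply that the elementary symmetric functions $\sigma_1,\dots,\sigma_n$ determine the multiset $\{a_1,\dots,a_n\}$ through the coefficients of $Q(T)$, so that their simultaneous vanishing is equivalent to the vanishing of every $a_i$.
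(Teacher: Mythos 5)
Your proof is correct and follows essentially the same route as the paper's: both reduce the reverse inclusion to the fact that the only common zero of $\sigma_1,\dots,\sigma_n$ is the origin, proved via the factorization $\prod_{i}(T-a_i)=\sum_i(-1)^i\sigma_i T^{n-i}$. The paper merely packages this slightly more abstractly through the linear map $\Phi=(L_1,\dots,L_n):V\to\mbC^n$, whereas you argue pointwise with $a_k=L_k(v)$; the content is identical.
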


\begin{proof}
    It is clear that $\mbfB(L_1,\cdots,L_n)$ is contained in $\mbfB(P_1,\cdots,P_n)$. So it remains to show the reverse inclusion.
    
    First we claim that the set of common zeros of $\sigma_k$'s consists of only the origin. Indeed, set $\sigma_0=1$. Then we have
    \[
    \prod_{i=1}^n (X-X_i) = \sum_{i=0}^n (-1)^i \sigma_i X^{n-i}.
    \]
    In particular, if $\textbf{x}=(x_1,\cdots,x_n)$ is a common zero of $\sigma_k$, the equality above implies
    \[
    \prod_{i=1}^n(X-x_i) = X^n.
    \]
    Then letting $X=x_i$ shows that $x_i^n=0$ and hence $x_i=0$, $1\leq i\leq n$. In other words, the point $(0,\cdots,0)$ is the only common zero of $\sigma_1,\cdots,\sigma_n$.

    Next we consider the natural linear map $\Phi:V\rightarrow \mbC^n$ defined as $(L_1,\cdots,L_n)$. In particular, if we regard $\sigma_k$ as a homogeneous polynomial of degree $k$ defined over $\mbC^n$, then we get
    \[
    \mbfB(P_1,\cdots,P_k)=\mbfB(\sigma_1\circ \Phi,\cdots,\sigma_n\circ \Phi).
    \]
    We have seen from above $\mbfB(\sigma_1,\cdots,\sigma_n)=\{(0,\cdots,0)\}$. This yields
    \[
    \mbfB(P_1,\cdots,P_n)=\ker(\Phi)=\mbfB(L_1,\cdots,L_n),
    \]
    which finishes the proof.
\end{proof}

Now we are in the position to prove  Theorem \ref{t.SpBaseArithVar}.
   
\begin{proof}[Proof of Theorem \ref{t.SpBaseArithVar}]
    Assume to the contrary that $\MS^r_X\not=0$ and let $\textbf{s}=(s_1,\cdots,s_r)\in \MS^r_X$ be a non-zero element. Let $h_k$, $1\leq k\leq r$, be the induced possibly singular Hermitian metric defined by $s_k$ on the dual tautological line bundle $\MSO_{\mbP(TX)}(-1)$ over $\mbP(TX)$. {\color{black}More} precisely, for any $v\in \MO_{\mbP(TX)}(-1)$, we define the length of $v$ with respect to $h$ as following:
    \[
    \|v\|_{h_k} \coloneqq |s_k(v^k)|^{\frac{1}{k}},
    \]
    where we regard $s_k\in H^0(X,\Sym^k\Omega_X^1)$ as an element of $H^0(\mbP(TX),\MO_{\mbP(TX)}(k))$ with the canonical isomorphism
    \[
    H^0(X,\Sym^k\Omega_X^1) \cong H^0(\mbP(TX),\MO_{\mbP(TX)}(k))
    \]
    and $v^k$ is viewed as a point contained in $\MO_{\mbP(TX)}(-k)$. Then $h_k=0$ if and only if $s_k=0$, and if $s_k\not=0$, then the curvature current of $h_k$ is non-positive as shown in Example \ref{l.Current-Global-section}.

    Let $g$ be the canonical K\"ahler--Einstein metric on $X$ and {\color{black} denote by} $g'$ the induced Hermitian metric on $\MSO_{\mbP(TX)}(-1)$. Then for any $1\leq k\leq m$ such that $h_k\not=0$, by Theorem \ref{thm_Mok_rigidity}, there exist non-negative constants $c_{1k},\cdots,c_{mk}$ such that for any $v\in TX$ that can be lifted to a vector $v'\in T_i$ such that $[v']\in \MS^i(\Omega)\subset \mbP(T_i)\subset \mbP(T\Omega)$, $1\leq i\leq m$, we have
    \[
    \|v\|_{h_k,[v]} = c_{ik}\|v\|_{g',[v]}.
    \]
    The the result will follows directly from the following claim.
    \begin{claim}
    \label{c.NOn-zero}
        There exist positive integers $1\leq i\leq m$ and $1\leq k\leq r$ such that $c_{ik}>0$. 
    \end{claim}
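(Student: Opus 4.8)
The plan is to prove the claim pointwise, using the hypothesis $\mbfs\neq 0$ together with the spectral-datum structure and Lemma \ref{l.zeros-spectral-data}, and then invoking the linear non-degeneracy of the VMRT to locate a characteristic direction on which some $s_k$ does not vanish. First I would choose the point: since $\mbfs=(s_1,\dots,s_r)\neq 0$, there is a point $x_0\in X$ at which the covectors $\omega_1,\dots,\omega_r\in \Omega^1_{X,x_0}$ supplied by Definition \ref{d.spectralbase} do not all vanish --- otherwise each $s_k=\sigma_k(\omega_1,\dots,\omega_r)$ would be identically zero. Consequently $W\coloneqq \bigcap_{j=1}^r\ker\omega_j$ is a \emph{proper} subspace of $T_{x_0}X$.

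Next I would apply Lemma \ref{l.zeros-spectral-data} in the fiber over $x_0$, taking $V=T_{x_0}X$, the covectors $\omega_j$, and $P_k=s_k(x_0)=\sigma_k(\omega_1,\dots,\omega_r)$. The lemma gives $\mbfB(s_1,\dots,s_r)=\mbfB(\omega_1,\dots,\omega_r)=W$; equivalently, a tangent vector $v\in T_{x_0}X$ satisfies $s_k(v^k)=0$ for all $1\le k\le r$ if and only if $v\in W$.

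The geometric heart of the proof is to produce a characteristic direction lying outside $W$. The characteristic directions at $x_0$ are the nonzero vectors liftable to some $v'\in T_i$ with $[v']\in \MS^i(\Omega)$; over the $i$-th factor these sweep out the affine cone over the VMRT $\MO_1\subset \mbP(T_i)$. As $\MO_1$ is linearly non-degenerate, this cone spans $T_i$, and summing over $i$ shows that the characteristic directions span $T_{x_0}\Omega\cong T_{x_0}X$. Since $W$ is proper, it cannot contain this spanning family, so there is a characteristic direction $v_0$, liftable into $T_{i_0}$ with $[v_0']\in \MS^{i_0}(\Omega)$, with $v_0\notin W$. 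By the previous step, $s_{k_0}(v_0^{k_0})\neq 0$ for some $1\le k_0\le r$; in particular $s_{k_0}\not\equiv 0$, so $h_{k_0}\neq 0$ and the constants $c_{ik_0}$ from Theorem \ref{thm_Mok_rigidity} are defined. Comparing $\|v_0\|_{h_{k_0}}=|s_{k_0}(v_0^{k_0})|^{1/k_0}>0$ with the rigidity identity $\|v_0\|_{h_{k_0}}=c_{i_0k_0}\|v_0\|_{g'}$, and using $\|v_0\|_{g'}>0$ (since $g'$ is induced by the genuine K\"ahler--Einstein metric), one forces $c_{i_0k_0}>0$, which is exactly the claim.

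The step I expect to be the main obstacle is the spanning assertion: passing from the linear non-degeneracy of each individual VMRT $\MO_1\subset \mbP(T_i)$ to the statement that the characteristic directions span the full tangent space $T_{x_0}X=\bigoplus_i T_i$. One must check that the fibers of $\MS^i(\Omega)$ really do realize the cone over the VMRT of $\Omega_i$ inside $T_i$, and that the reduction to irreducible factors is organized so that the spanning holds factor by factor and hence globally. Granting this, the fiberwise application of Lemma \ref{l.zeros-spectral-data} and the strict positivity of $\|v_0\|_{g'}$ are routine.
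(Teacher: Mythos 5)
Your proof is correct and is essentially the paper's own argument run contrapositively: the paper assumes all $c_{ik}=0$ and uses Lemma \ref{l.zeros-spectral-data} fiberwise together with the pointwise linear non-degeneracy of $\MS^i(\Omega)\subset\mbP(T_i)$ to force $s_k\equiv 0$ (contradicting $\mbfs\neq 0$), whereas you locate a point $x_0$ and a characteristic direction outside $W$ to conclude $c_{i_0k_0}>0$ directly. The spanning step you flag as the main obstacle is exactly what the paper invokes --- non-degeneracy of each $\MS^i(\Omega)$ spans $T_i$, and summing over $i$ spans the whole tangent space --- so it is not a gap.
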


    \begin{proof}[Proof of Claim \ref{c.NOn-zero}]
        {\color{black} We} assume to the contrary that $c_{ik}=0$ for any $i$ and $k$. Fix a point $x\in X$. By the definition of $\MS_X$, there exist $r$ (maybe non-distinct) elements $w_1,\cdots,w_r$ contained in $\Omega^1_{X,x}=T^*_x X$ such that we have
    \[
    s_k(x)=(-1)^k\sigma_k(w_1,\cdots,w_r),
    \]
    where $\sigma_k$ is the $k$-th elementary symmetric polynomial in $n$ variables. If $c_{ik}=0$, then for any element $v\in T_x X$ that can be lifted to a vector $v'\in T_i$ such that $[v']\in \MS^i(\Omega)$, we always have $s_k(v)=0$. In particular, for given $i$, as $c_{ik}=0$ for any $1\leq k\leq r$, by Lemma \ref{l.zeros-spectral-data}, it follows that for any $v\in T_x X$ that can be lifted to a vector $v'\in T_i$ such that $[v']\in \MS^i(\Omega_X)$, we have 
    \[
    w_1(v)=\cdots=w_r(v)=0.
    \]
    As a consequence, since the $w_k$'s are linear functionals over $T_x X$, it follows that $w_k$'s vanish along the linear subspace of $T_x X$ spanned by the vectors $v\in T_x X$ that can be lifted to vectors $v'\in T_i$ such that $[v']\in \MS^i(\Omega)$. On the other hand, since $\MS^i(\Omega)\subset \mbP(T_i)$ is pointwise linearly non-degenerate, it {\color{black} follows} that the $w_k$'s vanish along the subspace of $T_x X$ generated by vectors $v$ which can be lifted to $v'\in T_i$. Thus the $w_k$'s vanish identically over $T_x X$ as $i$ is arbitrary. Then $s_k(x)=0$ for all $1\leq k\leq n$ and hence $s_k=0$ for any $1\leq k\leq r$, which is absurd.
    \end{proof}

    Now choose $i$ and $k$ such that $c_{ik}\not=0$. Then for any $v\in T_x X$ that can be lifted to $v'\in T_i$ such that $[v']\in \MS^i(\Omega)$, we have $\|v\|_{h_k}=c_{ik}\|v\|_{g}$. In particular, if $v\not=0$, then we have $s_k(v)\not=0$ and hence the curvature current of $h_k$ at $v$ vanishes (see Example \ref{l.Current-Global-section}), which 
{\color{black} contradicts with Theorem 3.3.  The proof of Theorem 1.3 is complete.}
\end{proof}

\begin{remark}
    As mentioned in \S \ref{sec_introduction}, Theorem \ref{t.SpBaseArithVar} does not hold for the rank-one case, i.e., compact quotient of complex balls. However, for the Kottwitz lattice $\Gamma\subset \textup{SU}(n,1)$, B.~Klingler proved in \cite[Theorem 1.11]{Klingler2013} that $\MA_X^r=0$ for $r\leq n-1$ if $n+1$ is prime. So it should be interesting to ask whether the spectral base $\MS_X^r$ is trivial for any $r$ in this case.
\end{remark}

\section{Rigidity and integrality from spectral base perspective}
The vanishing of the Hitchin base plays a significant role in understanding the rigidity and integrality of representations, as indicated in \cite{GromovSchoen1992,arapura2002higgs,Klingler2013}. For further insights, see also \cite{JostZuo1997,cadorel2022hyperbolicity} for generalizations to the quasi-projective situation.

Moreover, it is interesting to explore the relationship between symmetric differentials and the fundamental groups, as discussed in \cite{brunebarbeklingler2013symmetric}. As a consequence of the construction by Chen--Ng\^o, we observe that the Hitchin morphism factors through the spectral base, which allows us to strengthen many statements concerning the vanishing of the Hitchin base to apply to the vanishing of the spectral base as well.

\subsection{Rigidity of the character variety}
In this subsection, we will discuss the relationship between the rigidity of the character variety and the spectral base, as outlined in \cite{arapura2002higgs}.

\begin{theorem}[\cite{arapura2002higgs}]
\label{thm_old_certierian}
If the Hitchin base $\MA^r_X= 0$, then $\mfR^{\GL_r(\mbC)}$ is rigid. 
\end{theorem}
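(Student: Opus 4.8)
The plan is to package together the three ingredients already assembled in the excerpt: the non-abelian Hodge homeomorphism (Theorem \ref{t.Homeo}), the properness of the Hitchin morphism on the Dolbeault moduli space (Theorem \ref{thm_Hitchinmap_proper}), and the affineness of the character variety. The hypothesis $\MA^r_X = 0$ is exactly what makes the Hitchin morphism trivial, and the strategy is to convert this triviality into compactness of $\MMD^r$, transport that compactness to $\mfR^{\GL_r(\mbC)}$, and then exploit that a compact affine variety is finite.

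First I would observe that when $\MA^r_X = 0$ the restricted Hitchin morphism $\sh_X|_{\MMD^r}\colon \MMD^r \to \MA^r_X$ is the constant map onto the single point $0$; equivalently, every topologically trivial polystable Higgs bundle $(\MSE,\vp)$ satisfies $\Tr(\vp^i)=0$ for $1\le i\le r$, so by Newton's identities $\vp$ is everywhere nilpotent. Next I would invoke properness: since $\MA^r_X=\{0\}$ is a single (hence compact) point and $\sh_X|_{\MMD^r}$ is proper by Theorem \ref{thm_Hitchinmap_proper}, the space $\MMD^r = (\sh_X|_{\MMD^r})^{-1}(0)$ is the preimage of a compact set under a proper map, and is therefore compact in the analytic topology.

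I would then transport compactness across the correspondence. The map $\xi\colon \MMD^r \to \mfR^{\GL_r(\mbC)}$ of Theorem \ref{t.Homeo} is a homeomorphism, so $\mfR^{\GL_r(\mbC)}$ is compact as a topological space; note that only the topological statement is needed here, so the fact that $\xi$ is merely real-analytic over the smooth locus causes no difficulty. Finally I would conclude via affineness: the character variety $\mfR^{\GL_r(\mbC)}$ is a complex affine variety, and a compact complex affine variety must be zero-dimensional, since on any positive-dimensional irreducible component a non-constant coordinate function would give a non-constant holomorphic function on a compact space, contradicting the maximum principle. Thus $\mfR^{\GL_r(\mbC)}$ is a finite set of points, every class $[\rho]$ is isolated, and the character variety is rigid.

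The argument is short, and its only genuine content is the interplay between properness and affineness; there is no hard analysis to grind through. The step deserving the most care is the passage from properness of $\sh_X|_{\MMD^r}$ to compactness of the \emph{entire} $\MMD^r$, which relies on the target being a single point, together with the verification that the homeomorphism $\xi$ faithfully transports this compactness onto the affine character variety so that the maximum-principle argument applies.
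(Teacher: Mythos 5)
Your proof is correct and follows essentially the same route as the paper: the paper does not prove Theorem \ref{thm_old_certierian} directly (it cites Arapura), but its proof of the strengthened Theorem \ref{thm_rigid_dolbeault_base} is exactly your argument --- the fiber $\ssd_X^{-1}(0) \supset \MMD^r$ is compact by properness (Theorem \ref{thm_Hitchinmap_proper}), compactness passes to $\mfR^{\GL_r(\mbC)}$ via the correspondence of Theorem \ref{t.Homeo}, and a compact affine variety is zero-dimensional, hence rigid. Your reliance on $\xi$ being a homeomorphism (rather than merely the stated bijection) is the same implicit appeal the paper itself makes, and it is justified by Simpson's non-abelian Hodge correspondence.
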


This method has been successfully used by B.~Klingler \cite[Theorem 1.11]{Klingler2013} to study the rigidity problem of the Kottwitz lattice $\Gamma\subset {\rm SU}(n,1)$ as mentioned in the introduction. However, on the one hand it is in general not an easy task to check the vanishing of $\MA_X^r$ in Theorem \ref{thm_old_certierian} on the other hand there are many examples of varieties with rigid character variety but having non-vanishing $\MA^r_X$ -- see Examples \ref{e.BdO} and \ref{e.Shimurahigherrank} and \cite{bogomolov2011symmetric}. 
Arapura’s theorem above can be strengthened using the spectral base as follows.

\begin{theorem}
\label{thm_rigid_dolbeault_base}
The character variety $\mfR^{\GL_r(\mbC)}$ is rigid if and only if $\MS^r_{X;\Dol} = 0$. In particular, if $\MS^r_X = 0$, then $\mfR^{\GL_r(\mbC)}$ is rigid.
\end{theorem}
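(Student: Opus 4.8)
The plan is to reduce the rigidity of $\mfR^{\GL_r(\mbC)}$ to the (zero-)dimensionality of the Dolbeault moduli space $\MMD^r$, and then extract both implications from the factorisation $\sh_X=\iota_X\circ\ssd_X$ of Proposition \ref{p.CN-rankone}. Since $\mfR^{\GL_r(\mbC)}$ is a complex affine variety, it is rigid if and only if it is zero-dimensional, i.e. finite; and as $\xi$ is a bijection which is continuous (indeed a homeomorphism for the analytic topologies) by the non-abelian Hodge correspondence in Theorem \ref{t.Homeo}, this is in turn equivalent to $\MMD^r$ being finite. I would first dispose of the implication ``rigid $\Rightarrow \MS^r_{X;\Dol}=0$'' by contraposition. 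If $\MS^r_{X;\Dol}=\ssd_X(\MMD^r)\neq 0$, choose $[(\msE,\vp)]\in\MMD^r$ with $\ssd_X([(\msE,\vp)])\neq 0$, so that $\sh_X([(\msE,\vp)])=(\Tr(\vp),\dots,\Tr(\vp^r))\neq 0$. The scaling $t\cdot(\msE,\vp)=(\msE,t\vp)$ preserves polystability and the topological type and hence defines a map $\mbC^*\to\MMD^r$; composing with $\sh_X$ yields $t\mapsto(t\,\Tr(\vp),\dots,t^r\Tr(\vp^r))$, which is non-constant because some $\Tr(\vp^i)\neq 0$. Thus $t\mapsto[(\msE,t\vp)]$ is non-constant, $\MMD^r$ is infinite, and $\mfR^{\GL_r(\mbC)}$ is not rigid.

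For the converse ``$\MS^r_{X;\Dol}=0\Rightarrow$ rigid'', the decisive point is that the vanishing forces $\MMD^r$ to be compact. Indeed, $\ssd_X(\MMD^r)=0$ together with $\sh_X=\iota_X\circ\ssd_X$ shows that $\sh_X$ sends all of $\MMD^r$ to $0\in\MA^r_X$, so that $\MMD^r=(\sh_X|_{\MMD^r})^{-1}(0)$. Since $\sh_X|_{\MMD^r}$ is proper by Theorem \ref{thm_Hitchinmap_proper}, this preimage of a point is compact. Transporting through $\xi$, the affine variety $\mfR^{\GL_r(\mbC)}$ is then compact in the analytic topology. A complex affine variety that is compact must be zero-dimensional (a positive-dimensional affine variety carries a non-constant regular function and is unbounded in any closed embedding into $\mbC^N$, hence non-compact), so $\mfR^{\GL_r(\mbC)}$ is finite and therefore rigid. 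The ``in particular'' clause is then immediate: $\MS^r_{X;\Dol}\subset\MS^r_X$, so $\MS^r_X=0$ gives $\MS^r_{X;\Dol}=0$ and rigidity follows.

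I expect the genuinely non-formal inputs — and therefore the steps to get right — to be the following. First, the properness of the Hitchin morphism on $\MMD^r$ (Theorem \ref{thm_Hitchinmap_proper}) is exactly what converts the spectral vanishing into the compactness of $\MMD^r$, and this is the crux of the argument. Second, the transfer of compactness across $\xi$ requires $\xi$ to be continuous for the analytic topologies rather than merely a bijection, together with the elementary but essential fact that a compact complex affine variety is finite. By contrast, the verification that the $\mbC^*$-scaling $(\msE,\vp)\mapsto(\msE,t\vp)$ preserves $\MMD^r$ and that $\sh_X((\msE,t\vp))=(t^i\Tr(\vp^i))_{1\le i\le r}$ is routine, and I would only record it in passing.
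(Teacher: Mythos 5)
Your proposal is correct and follows essentially the same route as the paper: both directions use the factorisation $\sh_X=\iota_X\circ\ssd_X$, the forward implication via properness of $\sh_X|_{\MMD^r}$ (Theorem \ref{thm_Hitchinmap_proper}) forcing compactness of $\MMD^r$ and hence of the affine variety $\mfR^{\GL_r(\mbC)}$ through the correspondence $\xi$, and the converse via the $\mbC^*$-scaling $(\msE,\vp)\mapsto(\msE,t\vp)$ producing a non-trivial family when the spectral image is non-zero. Your added care about the continuity of $\xi$ and the finiteness of compact affine varieties only makes explicit what the paper leaves implicit.
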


\begin{proof}
Note that $\mfR^{\GL_r(\mbC)}$ is an affine variety. So $\mfR^{\GL_r(\mbC)}$ is rigid if and only if it is compact. If $\MS_{X;\Dol} = 0$, then $\MMD^r \subset \ssd_X^{-1}(0)$ is compact by Theorem \ref{thm_Hitchinmap_proper} and so is $\mfR^{\GL_r(\mbC)}$ by Theorem \ref{t.Homeo}. Hence $\mfR^{\GL_r(\mbC)}$ is rigid.

Conversely, suppose that the character variety $\mfR^{\GL_r(\mbC)}$ is rigid. Then $\mfR^{\GL_r(\mbC)}$ only consists of a finite number of points. We assume to the contrary that there exists a Higgs bundle $(\msE,\vp) \in \MMD^r$ such that $\ssd_X((\msE,\vp)) \neq 0$. Note that for any $t \in \mbC^*$, the Higgs bundle $(\msE,t\vp)$ is again polystable and $\ssd_X((\msE,\vp)) \neq \ssd_X((\msE,t\vp))$ {\color{black} for $t \neq 1$}. Thus we obtain a non-trivial deformation family of Higgs bundles and then there exists a non-trivial deformation in $\mfR^{\GL_r(\mbC)}$ by the non-abelian Hodge correspondence (cf. Theorem \ref{t.Homeo}), which contradicts the assumption that $\mfR^{\GL_r(\mbC)}$ is rigid.
\end{proof}

\subsection{Harmonic Maps into Bruhat-Tits Buildings}

In this subsection, we will briefly review the construction from harmonic maps into Bruhat-Tits buildings by Gromov--Schoen \cite{GromovSchoen1992}, as well as the construction of the spectral variety by means of harmonic maps. For more details, we refer to \cite[Appendix A]{Klingler2013}, \cite{daskalopoulos2021uniqueness, cadorel2022hyperbolicity, daskalopoulos2023notes}.

Let $F$ be a non-Archimedean local field. For the group $\GL_r(F)$, one can construct a Bruhat--Tits building, denoted as $\MV$. The Bruhat--Tits building is a contractible locally finite simplicial complex. Moreover, the group $\GL_r(F)$ acts continuously on $\MV$ by simplicial automorphisms, and the action is proper. The apartments of $\MV$ are isomorphic to the Cartan subalgebra $\mfh$.

The Bruhat--Tits building is a metric space of non-positive curvature, and the theory of harmonic maps into metric spaces has been developed in \cite{GromovSchoen1992, KorevaarSchoen1993}. Let $F$ be a non-Archimedean local field. A representation $\rho:\pi_1(X)\to \GL_r(F)$ is defined to be \emph{reductive} if the Zariski closure of $\rho(\pi_1(X))$ is a reductive subgroup of $\GL_r(F)$.

Let $\tX$ be the universal cover of $X$. Given a reductive representation $\rho:\pi_1(X)\to \GL_r(F)$, by Gromov--Schoen \cite{GromovSchoen1992}, there exists a Lipschitz harmonic $\rho$-equivariant map $f:\tX\to \MV$. A point $\tx\in \tX$ is called \emph{regular} if there exists an apartment of $\MV$ containing the image by $f$ of a neighborhood of $\tx$, and other points in $\tX$ are called \emph{singular}. For the covering map $\tX\to X$, we denote by $X^{\reg}\subset X$ the image of the regular points and $X^{\sing}$ as the complement of $X^{\reg}$ in $X$. By \cite[Theorem 6.4]{GromovSchoen1992}, the subspace $X^{\sing}$ has real Hausdorff codimension at least $2$.

Let $T$ be the maximal $F$-split torus of $\GL_r(F)$ and let $\{h_1,\cdots,h_r\}$ be the root system of $T$ in $\GL_r(F)$. For each apartment $A\subset \MV$, we can take the derivative of $h_i$, obtaining $r$ real 1-forms $\{dh_1,\cdots, dh_r\}$ on the apartment $\MV$. Additionally, if $U\subset X$ is an open set with $f(U)\subset A$ consisting of only regular points, we define $\omega_i=f^{*}(dh_i)^{1,0}\in \Omega_X^1$. The harmonicity of $f$ implies that $\omega_i$ is holomorphic. Moreover, let $A_{1}$ and $A_{2}$ be two apartments. Then, over the intersection, the sets
$$
\{dh_1,\cdots,dh_r\}|_{A_{1}} \quad \text{and} \quad \{dh_1,\cdots,dh_r\}|_{A_{2}}
$$
match up to permutation by the Weyl group action. Therefore, over the regular locus $X^{\reg}$, we obtain a multivalued holomorphic 1-form.

In analogy to the definition of the Hitchin morphism, we can define a map
\begin{equation}
\label{eq_alpha_map_regular_locus}
    \begin{split}
    \alpha_{\rho}&: \bigoplus_{i=1}^r(S^i\mfh^*)^{W}\to\bigoplus_{i=1}^rH^0(X^{\reg}, \Sym^i\Omega_{X^{\reg}}^1), \\
    &(h_1,\cdots,h_r)\mapsto (\sigma_1,\cdots,\sigma_r),
    \end{split}
\end{equation}
where $\sigma_i$'s are the symmetric {\color{black} polynomials} taking values as $(h_1,\cdots,h_r)$. Based on the definition of the spectral base $\MS_{X^{\reg}}$, the image of $\alpha_{\rho}$ lies in the spectral base.

Moreover, since the singular set $X^{\sing}$ has Hausdorff codimension at least two and $f$ is a Lipschitz map, the sections $\sigma_i$ uniquely extends to $X$, and the extension of $(\sigma_1,\cdots,\sigma_r)$ also lies in the spectral base $\MS_X$ by Lemma \ref{l.birationalinvarance}. Using the same notation, we write the extension map as 
\[
\alpha_{\rho}:\bigoplus_{i=1}^r(S^i\mfh^*)^{W}\to \MA_X^r=\bigoplus_{i=1}^rH^0(X,\Sym^i\Omega_{X}^1).
\]

Let $\mfR^{\GL_r(F)}:=\{\rho|\rho:\pi_1(X)\to \GL_r(F)\}/\sim$ be the character variety. The above construction allows us to define the Hitchin morphism $\ssd_{X;F}$ for non-Archimedean representations, which is defined as
\begin{equation}
 \label{eq_Hitchin_base_for_harmonic_maps}
    \begin{split}
        \ssd_{X;F}:\mfR^{\GL_r(F)}\to \MS_X,\;\ssd_{X;F}(\rho):=\alpha_{\rho}.
    \end{split}
\end{equation}

Recall that $\rho:\pi_1(X)\to \GL_r(F)$ is said to \emph{have a bounded image} if $\rho(\pi_1(X))$ is contained in a compact subgroup of $\GL_r(F)$, where the topology of $\GL_r(F)$ is defined by the topology of the local field $F$.

\begin{definition}
Let $F$ be a non-Archimedean local field. The non-Archimedean Dolbeault spectral base $\MS_{X;\Dol}^{\NA;F}$ is defined to be $\MS_{X;\Dol}^{\NA;F}:=\ssd_{X;F}(\mfR^{\GL_r(F)})\subset \MS_X$.
\end{definition}

For any reductive representation $\rho\in \mfR^{\GL_r(F)}$, we write $f_{\rho}:\tX\to \MV$ for the corresponding harmonic map to the Bruhat-Tits building defined by $\GL_r(F)$. We now state the following theorem, which is an analogue of Theorem \ref{thm_rigid_dolbeault_base}.

\begin{theorem}
\label{thm_harmonicmap_bounded}
For a non-Archimedean local field $F$ the non-Archimedean Dolbeault spectral base with respect to $F$ vanishes, $\MS_{X;\Dol}^{\NA;F}=0$, if and only if every harmonic map $f_{\rho}$ defined by $\rho\in \mfR^{\GL_r(F)}$ is a constant map. In particular, if $\MS_X=0$, then every $\rho\in \mfR^{\GL_r(F)}$ has a bounded image.
\end{theorem}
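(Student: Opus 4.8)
The plan is to reduce the stated equivalence to the pointwise assertion that, for each reductive $\rho\in\mfR^{\GL_r(F)}$, one has $\alpha_\rho=0$ if and only if the harmonic map $f_\rho$ is constant. Granting this, the theorem follows formally: by definition $\MS_{X;\Dol}^{\NA;F}=\ssd_{X;F}(\mfR^{\GL_r(F)})=\{\alpha_\rho\mid \rho\in\mfR^{\GL_r(F)}\}$, so $\MS_{X;\Dol}^{\NA;F}=0$ exactly when $\alpha_\rho=0$ for every $\rho$, which by the pointwise assertion holds exactly when every $f_\rho$ is constant. For the final sentence, the inclusion $\MS_{X;\Dol}^{\NA;F}\subset\MS_X$ shows that $\MS_X=0$ forces $\MS_{X;\Dol}^{\NA;F}=0$, after which it remains only to translate constancy of the $f_\rho$ into boundedness of the image.

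First I would treat the algebraic half. Recall that $\alpha_\rho$ is the tuple of symmetric differentials $\sigma_k(\omega_1,\dots,\omega_r)$, $1\le k\le r$, attached to the multivalued holomorphic $1$-form $[(\omega_1,\dots,\omega_r)]$ built from $f_\rho$ over $X^{\reg}$, where $\omega_i=f_\rho^*(dh_i)^{1,0}$. Fixing $x\in X^{\reg}$ and applying Lemma \ref{l.zeros-spectral-data} with $V=T_xX$ and $L_i=\omega_i(x)\in T_x^*X$, the associated polynomials $P_k$ are precisely the values $\sigma_k(\omega_1,\dots,\omega_r)(x)$. If $\alpha_\rho=0$ then each $P_k$ vanishes identically on $T_xX$, so $\mbfB(P_1,\dots,P_r)=T_xX$; the Lemma then yields $\mbfB(\omega_1(x),\dots,\omega_r(x))=T_xX$, which forces $\omega_i(x)=0$ for all $i$. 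As the reverse implication is immediate, we conclude that $\alpha_\rho=0$ if and only if all the $\omega_i$ vanish on $X^{\reg}$.

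The analytic half, which I expect to be the main obstacle, upgrades the vanishing $\omega_i\equiv 0$ to the constancy of $f_\rho$, and here one must invoke the regularity theory of Gromov--Schoen \cite{GromovSchoen1992}. Over $X^{\reg}$ the harmonic map $f_\rho$ locally factors through an apartment $A\cong\mfh\cong\mbR^r$, so on a small neighbourhood $U$ one may write $f_\rho|_U=(u_1,\dots,u_r)$ with $u_i=h_i\circ f_\rho$ real-valued and $\omega_i=\pa u_i$ holomorphic. Since the $u_i$ are real, $du_i=\pa u_i+\overline{\pa u_i}$, so $\omega_i=0$ gives $du_i=0$ and $f_\rho$ is locally constant on $X^{\reg}$; the crucial point is that the $r$ functionals $dh_1,\dots,dh_r$ span $\mfh^*$, so that the $\omega_i$ together control the entire $(1,0)$-differential of $f_\rho$ rather than a proper subspace of directions. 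Because $X^{\sing}$ has real Hausdorff codimension at least $2$ by \cite[Theorem 6.4]{GromovSchoen1992}, the locus $X^{\reg}$ is connected and $f_\rho$ is constant there; the Lipschitz continuity of $f_\rho$ then extends this value across $X^{\sing}$, so $f_\rho$ is globally constant.

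Finally, to deduce the concluding statement, assume $\MS_X=0$. By the equivalence just established every equivariant harmonic map $f_\rho$ is constant, say $f_\rho\equiv p\in\MV$. Equivariance then reads $\rho(\gamma)\cd p=p$ for all $\gamma\in\pi_1(X)$, so $\rho(\pi_1(X))$ is contained in the stabiliser of $p$ in $\MV$; since point-stabilisers of the $\GL_r(F)$-action on its Bruhat--Tits building are bounded subgroups, $\rho$ has bounded image.
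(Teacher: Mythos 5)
Your proposal is correct and takes essentially the same route as the paper's proof: both rest on the equivalence that $\alpha_\rho=0$ if and only if $f_\rho$ is constant, followed by deducing boundedness of $\rho(\pi_1(X))$ from the fixed point together with the properness of the $\GL_r(F)$-action on $\MV$. The paper's (very terse) proof simply asserts the implication ``$\alpha_\rho=0\Rightarrow f_\rho$ constant''; your expansion of it --- killing the $\omega_i$ pointwise via Lemma~\ref{l.zeros-spectral-data} and then integrating $du_i=0$ on the regular locus and across the codimension-two singular set of \cite{GromovSchoen1992} --- supplies exactly the details the paper leaves implicit.
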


\begin{proof} 
If $\MS_{X;\Dol}=0$, then for $\alpha_{\rho}$ in \eqref{eq_Hitchin_base_for_harmonic_maps}, we have $\alpha_{\rho}=0$, which implies that the harmonic map is a constant map. On the other side, if for every $\rho$, $f_{\rho}$ is a constant map, then based on the definition, $\alpha_{\rho}=0$. 

If $\MS_X=0$, then $f_{\rho}$ is a constant. As $\rho(\pi_1(X))$ fixes the point $f(\tX)$, and $\GL_r(F)$ acts properly on $\MV$, $\rho(\pi_1(X))$ is bounded in $\GL_r(F)$. 
\end{proof}

It would be very interesting to know the relationship between the Dolbeault spectral base $\MS_{X;\Dol}$ and the non-Archimedean Dolbeault spectral base. Given an embedding $\sigma:F\to \mbC$, then $\sigma$ induces a map $\sigma:\mfR_{\GL^r(F)}\to \mfR^{\GL_r(\mbC)}$. It will be very interesting to understand the following question:

\begin{question}
    Let $F$ be a non-Archimedean local field $F$ of characteristic zero and fix an embedding $\sigma:F\to \mbC$. For a representation $\rho\in \mfR^{\GL_r(F)}$, do we have the following equality
    \[
    \ssd_{X;F}(\rho)=\ssd_X(\xi^{-1}\circ\sigma\circ\rho)?
    \]
    Here the map $\xi^{-1}$ is the non-abelian Hodge correspondence map in \eqref{eq_NAH_map} that maps the reductive representation $\sigma\circ\rho$ to a polystable Higgs bundle.
\end{question}

\subsection{Applications}
In this subsection, we will summarize previous results in \cite{GromovSchoen1992,arapura2002higgs,Klingler2013} and rephrase them using the spectral base instead of the Hitchin base. The first one is a combination of Proposition \ref{p.CN-rankone}  with \cite[Proposition 2.4]{arapura2002higgs} and \cite[Theorem 1.6]{Klingler2013}.

\begin{theorem}
\label{thm_new_methods}
    Let $X$ be a projective manifold such that $\MS^r_{X}=0$ for some $r\geq 1$. Then the following statements hold. 
    \begin{enumerate}
        \item The character variety $\mfR^{\GL_r(\mbC)}$ is rigid.
        \item Let $F$ be any non-Archimedean field. Then any reductive representation $\rho:\pi_1(X)\to \GL_r(F)$ has bounded image.
    \end{enumerate}
\end{theorem}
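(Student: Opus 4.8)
The plan is to read off both assertions directly from the two characterization results established above, exploiting the chain of inclusions among the various spectral bases together with the vanishing hypothesis $\MS^r_X=0$.

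For the first assertion, I would invoke the inclusion $\MS^r_{X;\Dol}\subseteq \MS^r_X$ recorded just after the definition of the Dolbeault spectral base. The hypothesis $\MS^r_X=0$ therefore forces $\MS^r_{X;\Dol}=0$, and Theorem \ref{thm_rigid_dolbeault_base} immediately yields that $\mfR^{\GL_r(\mbC)}$ is rigid. There is nothing further to do for part (1), since it is precisely the ``in particular'' clause of that theorem.

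For the second assertion, I would first treat the case where $F$ is a non-Archimedean \emph{local} field. By construction one has the inclusion $\MS_{X;\Dol}^{\NA;F}\subseteq \MS_X$, so $\MS^r_X=0$ gives $\MS_{X;\Dol}^{\NA;F}=0$. Theorem \ref{thm_harmonicmap_bounded} then shows that the harmonic map $f_\rho$ associated to any reductive $\rho\in\mfR^{\GL_r(F)}$ is constant; since $\rho(\pi_1(X))$ fixes the image point $f_\rho(\tX)$ and $\GL_r(F)$ acts properly on the Bruhat--Tits building $\MV$, the image $\rho(\pi_1(X))$ is contained in a compact subgroup, hence bounded.

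The only step requiring genuine care --- and the step I expect to be the main obstacle --- is upgrading from a local field to an \emph{arbitrary} non-Archimedean field $F$ as in the statement, since Theorem \ref{thm_harmonicmap_bounded} is phrased for local fields and the properness of the action on $\MV$ truly uses local compactness. The plan is to reduce boundedness over $F$ to the local case. Because $X$ is projective, $\pi_1(X)$ is finitely generated, so the entries of $\rho$ generate a finitely generated subfield $F_0\subseteq F$ carrying the restricted non-Archimedean valuation; any failure of boundedness would already be visible on the finitely many generators. By a standard specialization of the valuation on the finitely generated field $F_0$, one produces a non-Archimedean local field $F'$ together with a place under which the image of $\rho$ remains unbounded, and applying the local case to $F'$ yields a contradiction. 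The delicate point is to arrange this specialization compatibly with the valuation so that the Gromov--Schoen harmonic-map construction underlying Theorem \ref{thm_harmonicmap_bounded}, and in particular the properness of the building action, remains available for $F'$; this valuation-theoretic descent is routine but is where all the real work of part (2) is concentrated.
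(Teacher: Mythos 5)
Your proposal follows the paper's own proof in everything the paper actually argues. Part (1) is, as you say, exactly the ``in particular'' clause of Theorem \ref{thm_rigid_dolbeault_base} via the inclusion $\MS^r_{X;\Dol}\subseteq\MS^r_X$, and part (2) for a non-Archimedean \emph{local} field is exactly the ``in particular'' clause of Theorem \ref{thm_harmonicmap_bounded}; your fixed-point/properness argument is literally the one inside the proof of that theorem. Where you diverge is the final reduction step: the paper makes no attempt to pass from local fields to arbitrary non-Archimedean fields. Its proof of (2) consists solely of citing Theorem \ref{thm_harmonicmap_bounded}, and both Theorem \ref{thm_main_theorem_general_variety}(2) and the only place part (2) is used (Proposition \ref{prop_bounded_Z_variation}, where $F=K_v$ is the completion of a number field at a finite place) are stated for \emph{local} fields, so the word ``local'' has evidently been dropped by accident in the statement of Theorem \ref{thm_new_methods}(2). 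You correctly spotted a real discrepancy in the wording, but you are attempting to prove more than the paper proves.

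That extra step, as sketched, has a genuine gap. The claim that ``any failure of boundedness would already be visible on the finitely many generators'' is false at the level of matrix entries, in both directions: over $\mbQ_p$ the element $\diag(p^{-1},1)$ has entries of absolute value at most $p$ yet generates an unbounded subgroup, while the conjugate $\begin{pmatrix}1 & p^{-1}\\ 0 & 1\end{pmatrix}$ of an integral unipotent matrix has an entry of absolute value $p>1$ yet generates a bounded subgroup. Consequently a specialization $\sigma\colon F_0\to F'$ arranged to keep one generator entry large does \emph{not} force $\sigma\circ\rho$ to be unbounded over the local field $F'$, and the contradiction you want never materializes. The standard repair is to replace entries by a conjugation-invariant, finitely-testable criterion: for a reductive representation over a non-Archimedean valued field, the image is bounded if and only if every trace $\Tr\rho(\gamma)$ is integral (the Bass--Vinberg integrality-of-characters criterion; the nontrivial direction follows from Burnside's theorem applied to the irreducible summands). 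From unboundedness over $F$ one then extracts a single $\gamma_0$ with $|\Tr\rho(\gamma_0)|>1$, specializes the finitely generated field $F_0$ into a local field $F'$ preserving $|\sigma(\Tr\rho(\gamma_0))|>1$ (this is the standard Tits-type specialization lemma, and only needs to respect finitely many inequalities), and concludes that the semisimplification of $\sigma\circ\rho$ is unbounded over $F'$, contradicting the local case. With traces in place of generator entries your outline becomes a correct argument; with entries, the step fails.
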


\begin{proof}
(1) follows directly from Theorem \ref{thm_rigid_dolbeault_base} and (2) follows from Theorem \ref{thm_harmonicmap_bounded}.
\end{proof}

The second application is related to Simpson's integrality conjecture.

\begin{proposition}[\protect{\cite[Theorem 5]{Simpson1992} and \cite[Corollary 1.8]{Klingler2013}}]
\label{prop_bounded_Z_variation}
    Let $X$ be a projective manifold such that $\MS^r_X=0$ for some $r\geq 1$. Then any reductive representation $\rho:\pi_1(X)\to \GL_r(\mbC)$ is integral and it is a complex direct factor of a $\mbZ$-VHS.
\end{proposition}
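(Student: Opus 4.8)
The plan is to deduce both assertions from two consequences of the vanishing of the spectral base that are already available: the rigidity of the character variety and the boundedness of the image at every non-Archimedean place. First, by Theorem \ref{thm_rigid_dolbeault_base}, the hypothesis $\MS^r_X=0$ forces $\mfR^{\GL_r(\mbC)}$ to be rigid, so $[\rho]$ is an isolated point. Since $\mfR^{\GL_r(\mbC)}$ is defined over $\mbQ$ (in fact over $\mbZ$), its isolated points are defined over $\overline{\mbQ}$, and hence after conjugation $\rho$ takes values in $\GL_r(K)$ for some number field $K$. By \cite[Theorem 5]{Simpson1992} a rigid reductive representation underlies a $\mbC$-VHS, which will supply the Hodge-theoretic structure once the integral refinement is in place.

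For integrality I would feed Theorem \ref{thm_harmonicmap_bounded} into the standard lattice argument. Fix a finite place $v$ of $K$ with completion $K_v$, and compose $\rho$ with $K\hookrightarrow K_v$ to obtain a reductive representation $\rho_v:\pi_1(X)\to \GL_r(K_v)$ (reductivity of the Zariski closure being preserved under this base change). As $\MS^r_X=0$, Theorem \ref{thm_harmonicmap_bounded} shows $\rho_v$ has bounded image, so $\rho(\pi_1(X))$ stabilizes an $\MO_{K_v}$-lattice $L_v\subset K_v^r$. Since the entries of the finitely many generators are $v$-integral for all but finitely many $v$, one may take $L_v=\MO_{K_v}^r$ outside a finite set, and the local--global correspondence for lattices then produces a single $\pi_1(X)$-stable $\MO_K$-lattice $L\subset K^r$. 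After a harmless finite extension of $K$ (to trivialize the ideal class of $L$), an $\MO_K$-basis of $L$ conjugates $\rho$ into $\GL_r(\MO_K)$; thus $\rho$ is integral.

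To realize $\rho$ as a complex direct factor of a $\mbZ$-VHS I would follow \cite[Corollary 1.8]{Klingler2013}. Let $\sigma_1,\dots,\sigma_d:K\hookrightarrow\mbC$ be the complex embeddings; each Galois conjugate $\rho^{\sigma_j}$ is again an isolated point of the $\mbQ$-variety $\mfR^{\GL_r(\mbC)}$, hence rigid and, by Simpson, a $\mbC$-VHS. By the integrality established above, the direct sum $\bigoplus_{j=1}^d\rho^{\sigma_j}$ is the complexification of the $\mbZ$-local system $\underline{L}$ carried by $L$ through restriction of scalars $\mathrm{Res}_{K/\mbQ}$, and $\rho$ appears as the summand indexed by the identity embedding. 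The main obstacle lies exactly here: one must verify that the Hodge filtrations and polarizations on the individual $\mbC$-VHS summands descend compatibly with this $\mbZ$-structure, matching the polarization arising from the trace form $\mathrm{Tr}_{K/\mbQ}$ with the Hodge-theoretic ones and checking the conjugation relations between the filtrations attached to $\sigma_j$ and $\overline{\sigma_j}$, so that $\bigoplus_j\rho^{\sigma_j}$ is a genuine polarized $\mbZ$-VHS rather than merely a $\mbZ$-local system underlying a collection of $\mbC$-VHS. This compatibility is precisely the content of the cited theorems of Simpson and Klingler, which I would invoke to conclude.
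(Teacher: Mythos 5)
Your proof is correct and follows essentially the same route as the paper: rigidity via Theorem \ref{thm_rigid_dolbeault_base}, definition of $\rho$ over a number field $K$ after conjugation, boundedness of $\rho_v$ at every finite place $v$ (Theorem \ref{thm_harmonicmap_bounded}) yielding integrality, and Simpson's restriction-of-scalars discussion (the paragraph before \cite[Corollary 4.9]{Simpson1992}, together with \cite[Theorem 5]{Simpson1992} and \cite[Corollary 1.8]{Klingler2013}) for the $\mbZ$-VHS statement. If anything, your lattice-gluing step is spelled out more carefully than in the paper, which passes directly from boundedness at all finite places to $\rho(\pi_1(X))\subset \GL_r(\MO_K)$.
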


\begin{proof}
By Theorem \ref{thm_new_methods}, the character variety $\mfR^{\GL_r(\mbC)}$ is rigid and hence is zero-dimensional. In particular, as $\mfR^{\GL_r(\mbC)}$ is defined over $\mathbb{Q}$, there exists a number field $K$ such that the point $[\rho]\in \mfR^{\GL_r(\mbC)}$ is defined over $K$. Then after replacing $\rho$ by some conjugation, we can assume that $\rho$ takes values in $\GL_r(K)$. Let $v$ be an arbitrary finite place of $K$. Then the induced representation $\rho_v:\pi_1(X)\rightarrow \GL_r(K_v)$, which is obtained from $\rho$ through the embedding $K\to K_v$, is still reductive.  So Theorem \ref{thm_new_methods} implies that $\rho_v$ has bounded image in $\GL_r(K_v)$. As $v$ is arbitrary, the image $\rho(\pi_1(X))$ lies in $\GL_r(\MO_K)$.

Finally, since $\rho(\pi_1(X))$ is contained in $\GL_r(\MO_K)$, the traces $\Tr(\rho(\gamma))$ are algebraic integers for any $\gamma \in \pi_1(X)$. So it follows from \cite[Theorem 5]{Simpson1992} and the discussion in the paragraph before \cite[Corollary 4.9]{Simpson1992} that $\rho$ is a complex direct fact of a $\mbZ$-VHS.
\end{proof}

\begin{remark}
\label{r.EsnaultTalk}
    We have {\color{black} learned} from a talk by H.~Esnault that the argument of \cite{EsnaultGroechenig2018} can be applied to show that if $\dim(\mfR^{\GL_r(\mbC)})=0$, then any representation $\rho:\pi_1(X)\rightarrow \GL_r(\mbC)$ is integral.
\end{remark}

\begin{proof}[Proof of Theorem \ref{thm_main_theorem_general_variety}]
    It follows from Theorem \ref{thm_new_methods} and Proposition \ref{prop_bounded_Z_variation}.
\end{proof}

H.~Esnault asked whether a projective manifold with infinite fundamental group must have a non-zero symmetric differential. This question was answered by Y.~Brunebarbe, B.~Klingler and B.~Totaro in \cite{brunebarbeklingler2013symmetric} for its linear version. As the last application, we obtain the following variation of \cite{brunebarbeklingler2013symmetric} following the same argument there.

\begin{theorem}[\protect{Variation of \cite[Theorem 6.1]{brunebarbeklingler2013symmetric}}]
\label{t.BKT}
    Let $X$ be a projective manifold and let $K$ be a field of characteristic zero. If there is a linear representation $\rho:\pi_1(X)\to \GL_r(K)$ such that the image is infinite, then one of the following statements hold.
    \begin{enumerate}
        \item $\MS^k_X\neq 0$ for some $k\geq 1$.

        \item The semi-simplification $\rho^{\rs}$ of $\rho$ is a complex direct factor of a $\mbZ$-VHS with infinite discrete monodromy group.
    \end{enumerate}
\end{theorem}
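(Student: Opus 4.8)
The plan is to argue by contraposition: assuming that (1) fails, i.e.\ that $\MS^k_X=0$ for every $k\geq 1$, I will deduce (2). Since $\pi_1(X)$ is finitely generated, the matrix entries of $\rho$ generate a subfield of $K$ that is finitely generated over $\mbQ$; replacing $K$ by this subfield and choosing an embedding into $\mbC$ (possible since $\mathrm{char}\,K=0$), I may assume $\rho:\pi_1(X)\to \GL_r(\mbC)$ has infinite image. The whole content of the rigidity/integrality/VHS alternative is already packaged in Theorem \ref{thm_main_theorem_general_variety}, so the idea is simply to feed $\rho^{\rs}$ into it. The only obstruction to an immediate conclusion is that $\rho^{\rs}$ might a priori have finite image, in which case Theorem \ref{thm_main_theorem_general_variety} would produce a $\mbZ$-VHS with possibly \emph{finite} monodromy, falling short of (2). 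So the first genuine step is to exclude this under the hypothesis $\MS^k_X=0$.

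First I would dispose of the case where $\rho^{\rs}$ has finite image. In that case the Zariski closure of $\rho(\pi_1(X))$ has unipotent identity component, so the finite-index normal subgroup $\Gamma'\coloneqq \ker(\rho^{\rs})=\pi_1(X')$, attached to a connected finite étale cover $X'\to X$, satisfies that $\rho(\Gamma')$ is a finitely generated infinite unipotent, hence nilpotent, group. A finitely generated infinite nilpotent group has infinite abelianization, so $H^1(\Gamma';\mbQ)\neq 0$; by Hodge theory on the smooth projective manifold $X'$ this yields a nonzero $\omega\in H^0(X',\Omega_{X'}^1)$, i.e.\ $\MS^1_{X'}\neq 0$. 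It remains to transport this to $X$. Passing to the Galois closure I may assume $X'\to X$ is Galois of group $G$ and degree $d$; since the cover is étale, $\pi^*\colon T^*_xX\xrightarrow{\sim}T^*_{x'}X'$ is an isomorphism at each point, so the multiset $\{(\pi^*)^{-1}(\omega(x'))\colon \pi(x')=x\}$ defines a $G$-invariant, hence descending, holomorphic section $X\to \Chow^d(\Tot(\Omega_X^1)/X)$. Via the closed embedding \eqref{e.closedembeddingChow^r} this is an element of $\MS^d_X$, and it is nonzero because $\omega\neq 0$. This contradicts $\MS^k_X=0$ for all $k$, so in fact $\rho^{\rs}$ must have infinite image.

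With $\rho^{\rs}$ now known to have infinite image, I would conclude by applying Theorem \ref{thm_main_theorem_general_variety} directly to the reductive representation $\rho^{\rs}$ (which is legitimate since $\MS^r_X=0$): it asserts that $\rho^{\rs}$ is rigid and integral and is a complex direct factor of a $\mbZ$-VHS. The monodromy group of this $\mbZ$-VHS admits the infinite image of $\rho^{\rs}$ as a quotient of one of its direct factors, hence is infinite, and it is discrete because it preserves an integral lattice. This is precisely statement (2), which finishes the contrapositive.

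The hard part will be the second step, namely the descent of spectral data along the finite étale cover. Producing the holomorphic $1$-form on $X'$ is routine, but one must verify that its fibrewise symmetrization genuinely lands in the spectral base $\MS^d_X$, and not merely in the Hitchin base $\MA_X^d$: this is exactly where the identification of $\MS_X$ with sections of the relative Chow scheme \eqref{e.closedembeddingChow^r} is indispensable, together with Lemma \ref{l.birationalinvarance} to reduce the verification of the Chow condition to the generic fibre. Once this functoriality of the spectral base under finite étale covers is in place, the remainder is a direct invocation of Theorem \ref{thm_main_theorem_general_variety}, since all the non-Archimedean (Gromov--Schoen) and Higgs-theoretic input is already absorbed there.
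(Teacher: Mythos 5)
Your proof is correct, and its skeleton coincides with the paper's own: the same dichotomy on whether $\rho^{\rs}$ has finite or infinite image, the same Brunebarbe--Klingler--Totaro argument producing a nonzero $1$-form $\omega$ on a finite \'etale cover $\pi:X'\to X$ in the finite-image case, and the same conclusion in the infinite-image case via rigidity plus integrality (the paper invokes Theorem \ref{thm_rigid_dolbeault_base} and Proposition \ref{prop_bounded_Z_variation}, which together are exactly your invocation of Theorem \ref{thm_main_theorem_general_variety}). The one genuine divergence is the step you flagged as the hard part, namely transporting $\omega$ into $\MS_X$. You build the section of $\Chow^d(\Tot(\Omega_X^1)/X)$ directly from the fibrewise multiset of pullbacks of $\omega$; the paper instead forms the Higgs bundle $(\MSE,\varphi)=(\pi_*\MO_{X'},\times\,\omega)$ on $X$ and takes $\mathbf{s}=\ssd_X([\MSE,\varphi])$, using the Chen--Ng\^o factorization (Proposition \ref{p.CN-rankone}). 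These yield the same spectral datum, since over a small $U\subset X$ with $\pi^{-1}(U)=\sqcup U_i$ the Higgs field is diagonal with entries $(\pi|_{U_i}^{-1})^*\omega$. Your route is marginally more elementary and makes the non-vanishing transparent: if all elementary symmetric functions of the fibrewise values vanished, every value of $\omega$ would vanish. (The paper's shortcut ``$\varphi\neq 0$, hence $\mathbf{s}\neq 0$'' implicitly relies on the same observation, as non-vanishing of a Higgs field does not in general imply a nonzero spectral datum --- nilpotent Higgs fields being the obvious counterexample.) What the paper's route buys is the refinement recorded in its subsequent remark: by Grothendieck--Riemann--Roch the bundle $\pi_*\MO_{X'}$ is topologically trivial, so the nonzero datum lies in the smaller Dolbeault spectral base $\MS^k_{X;\Dol}$, which your Chow-theoretic construction does not directly provide. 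Two small economies are also available to you: the passage to the Galois closure is unnecessary, since the elementary symmetric functions of the fibrewise multiset are well defined and holomorphic on all of $X$ for any finite \'etale cover; and since the condition of Definition \ref{d.spectralbase} then holds at every point of $X$ rather than merely generically, the appeal to Lemma \ref{l.birationalinvarance} can be dropped.
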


\begin{proof}
    Let $\rho^{\rs}:\pi_1(X)\to \GL_{r}(K)$ be the semi-simplification of $\rho$. Assume that the representation $\rho^\rs$ has finite image. Then there exists a finite \'etale covering $\pi:X'\rightarrow X$ such that $H^0(X',\Omega_{X'}^1)\not=0$ by the same argument of the proof of \cite[Theorem 6.1]{brunebarbeklingler2013symmetric}. Choose a non-zero holomorphic $1$-form $\alpha\in H^0(X',\Omega_{X'}^1)$. Then we can define a Higgs bundle on $X$ as following:
    \[
    \varphi:\MSE\coloneqq \pi_*\MO_{X'} \xrightarrow{\times \alpha} \pi_*(\MO_{X'}\otimes \Omega_{X'}^1) \xrightarrow{\cong} \pi_*\MO_{X'}\otimes \Omega_X^1 \eqqcolon \MSE\otimes \Omega_X^1,
    \]
    where the last isomorphism follows from the projection formula and the fact $\pi^*\Omega_X^1\cong \Omega_{X'}^1$ as $\pi$ is \'etale. One can easily see that the Higgs field $\varphi$ is non-zero and so $\textbf{s}:=\ssd_X([\MSE,\varphi])$ is a non-zero element in $\MS^k_X$, where $k=\deg(\pi)$. 

    Now we assume that $\MS^k_{X}=0$ for any $k\geq 1$ and the representation $\rho^{\rs}$ has infinite image. {\color{black}Then}, by Theorem \ref{thm_rigid_dolbeault_base}, the representation $\rho^{\rs}$ is rigid and then we can conclude by Proposition \ref{prop_bounded_Z_variation} {\color{black} that (2) in the statement of Theorem 4.9 holds.}
\end{proof}

\begin{remark}
    \begin{enumerate}
        \item By the Grothendieck--Riemann--Roch theorem, since $\pi:X'\rightarrow X$ is a finite \'etale morphism, the Higgs bundle $\MSE=\pi_*\MO_{X'}$ is actually topologically trivial. So the non-zero spectral datum $\textbf{s}$ is contained in $\MS_{X;\Dol}^r$,
    
        \item  We briefly recall the argument of \cite{brunebarbeklingler2013symmetric} to show the existence of non-zero symmetric forms in the second case of Theorem \ref{t.BKT}. After replacing $X$ by a finite \'etale covering $X'$, we may assume that the monodromy group $\Gamma$ is torsion-free. Let $Y$ be a resolution of the image of the period map $\Phi:X'\rightarrow D/\Gamma$. Then $Y$ is positive-dimensional as $\Gamma$ is infinite. Let $Z\rightarrow X$ be a resolution of the rational map $X'\dashrightarrow Y$. Note that the cotangent bundle $\Omega_Y^1$ is big by \cite[Corollary 3.2]{brunebarbeklingler2013symmetric}. This implies that in particular $Y$ has non-zero symmetric forms and then it also induces non-zero symmetric forms on $Z$, which
naturally descends to $X'$. As $\pi:X'\rightarrow X$ is \'etale, we have the following commutative diagram
        \[
        \begin{tikzcd}[row sep=large, column sep=large]
            \mbP(T_{X'}) \arrow[d] \arrow[r,"{\cong}"]
                & \mbP(\pi^*T_X) \arrow[d] \arrow[r,"\bar{\pi}"] 
                    & \mbP(T_X) \arrow[d]  \\
            X' \arrow[r,"="]
                & X' \arrow[r,"\pi"]
                    & X.
        \end{tikzcd}
        \]
        Recall that $\bar{\pi}^*\MO_{\mbP(T_X)}(1)\cong \MO_{\mbP(T_{X'})}(1)$ and the Iitaka dimension is preserved under finite morphisms. Hence, using the following two natural identifications for any $k\geq 0$
        \[
        H^0(\mbP(T_{X'}),\MO_{\mbP(T_{X'})}(k)) \cong H^0(X',\Sym^k \Omega_{X'}^1)
        \]
        and 
        \[
        H^0(\mbP(T_{X}),\MO_{\mbP(T_{X})}(k)) \cong H^0(X,\Sym^k \Omega_{X}^1),
        \]
        one can derive that the existence of non-zero symmetric forms on $X'$ also yields the existence of non-zero symmetric forms on $X$.

        \item One cannot expect to derive that $\MS_X^k\not=0$ for some $k\geq 1$ in the second case of Theorem \ref{t.BKT} --- see Theorem \ref{t.SpBaseArithVar} and Example \ref{e.Shimurahigherrank}.
    \end{enumerate}
\end{remark}

\begin{proof}[Proof of Corollary \ref{c.RigInte}]
    This follows directly from Theorem \ref{t.SpBaseArithVar} and Theorem \ref{thm_new_methods}.
\end{proof}

\begin{proof}[Proof of Corollary \ref{cor_nilpotent_Higgs}]
This follows directly from the vanishing of the spectral base.
\end{proof}

\subsection{Examples}
In this subsection, we summarise the relations between the various vanishing of symmetric differentials and the rigidity/integrality/finiteness of representations in the following diagram.

\[
\begin{tikzcd}[column sep=large, row sep=large] 
    \MS_X^r=0 \arrow[rr, Rightarrow] \arrow[ddrr, Rightarrow, bend right, "\textup{Thm. \ref{thm_main_theorem_general_variety}}" description ]
        & 
            & \MS_{X;\Dol}^r=0 \arrow[r, Leftarrow]
                & \MA_X^r=0  \arrow[dl, Rightarrow, "\textup{\cite{arapura2002higgs}}" description] \arrow[ddl, Rightarrow, bend left, "\textup{\cite{Klingler2013}}" description]  \arrow[lll, Rightarrow, bend right]\\
        & \dim(\mfR^{\GL_r(\mbC)})=0 \arrow[ur, Leftrightarrow, "\textup{Thm. \ref{thm_rigid_dolbeault_base}}" description] \arrow[r, Rightarrow] \arrow[dr, Rightarrow, "\textup{\cite{EsnaultGroechenig2018}}" description]
            & \textup{Rig}^r(X) \arrow[d, Rightarrow, dashed, "\textup{Conj.}" description]
                &  \\
        &   
            & \textup{Int}^r(X)
                &  \\
        &  \textup{Fin}(X) \arrow[r, Rightarrow, dashed, "?" description]
            & \textup{FinLin}(X)
               &   \\
    \MS_X=0 \arrow[uuuu, Rightarrow] 
        &
            &
               & \MA_X=0 \arrow[uuuu, Rightarrow] \arrow[lll, Rightarrow] \arrow[ul, Rightarrow, "\textup{\cite{brunebarbeklingler2013symmetric}}" description]
               \arrow[ull, Rightarrow, dashed, "\textup{Conj.}" description]
\end{tikzcd}
\]

Here $\textup{Rig}^r(X)$ means that every representation $\rho:\pi_1(X)\rightarrow \GL_r(\mbC)$ is rigid and $\textup{Int}^r(X)$ means that every representation $\rho:\pi_1(X)\rightarrow \GL_r(\mbC)$ is integral. The notation $\textup{Fin}(X)$ means that the fundamental group $\pi_1(X)$ is finite and $\textup{FinLin}(X)$ says that all the linear representations of $\pi_1(X)$ have finite image. Moreover, the condition $\MS_X=0$ (resp. $\MA_X=0$) just means that $\MS_X^r=0$ (resp. $\MA_X^r=0$) for all $r\geq 1$. 

\begin{example}[\protect{\cite[p.~1092, Example]{bogomolov2011symmetric}}]
\label{e.BdO}
There exists a simply connected smooth projective threefold $X$ such that $\MS_X^2\not=0$. In particular, the natural inclusion $\MS_{X;\Dol}^2\subset \MS_X^2$ is strict in this case.
\end{example}

\begin{example}
\label{e.Shimurahigherrank}
    Let $X=\Omega/\Gamma$ be a quotient of bounded symmetric domain {\color{black} of rank $\ge 2$} by an {\color{black} irreducible} torsion-free cocompact lattice as in Theorem \ref{t.SpBaseArithVar}. Then we have $\MS_X^r=0$ by Theorem \ref{t.SpBaseArithVar} and $\dim(\MA_X^r)\gg 0$ for $r$ sufficiently large. In particular, the natural inclusion $\MS_X^r\subset \MA_X^r$ is strict for $r\gg 0$. Moreover, as $\Gamma$ is infinite, it follows that $\MS_X=0$ does not imply $\textup{FinLin}(X)$; that is, the conclusion of Theorem \ref{t.BKT} cannot be improved to $\MS_X\not=0$.
\end{example}

	\bibliographystyle{alpha}
	\bibliography{references}

\begin{thebibliography}{GRR15}

\bibitem[Ara02]{arapura2002higgs}
Donu Arapura.
\newblock Higgs bundles, integrability, and holomorphic forms.
\newblock {\em Motives, polylogarithms and Hodge theory, Part II (Irvine, CA,
  1998)}, 3:605--624, 2002.

\bibitem[BDO11]{bogomolov2011symmetric}
Fedor Bogomolov and Bruno De~Oliveira.
\newblock Symmetric differentials of rank 1 and holomorphic maps.
\newblock {\em Pure Appl. Math. Q.}, 7(4, Special Issue: In memory of Eckart
  Viehweg):1085--1103, 2011.

\bibitem[BKT13]{brunebarbeklingler2013symmetric}
Yohan Brunebarbe, Bruno Klingler, and Burt Totaro.
\newblock Symmetric differentials and the fundamental group.
\newblock {\em Duke Math. J.}, 162(14):2797--2813, 2013.

\bibitem[BKU23]{bolognese2023p}
Barbara Bolognese, Alex K{\"u}ronya, and Martin Ulirsch.
\newblock {$P=W$} phenomena on abelian varieties.
\newblock {\em arXiv preprint arXiv:2303.03734}, 2023.

\bibitem[BS94]{BandoSiu1994}
Shigetoshi Bando and Yum-Tong Siu.
\newblock Stable sheaves and {E}instein-{H}ermitian metrics.
\newblock In {\em Geometry and analysis on complex manifolds}, pages 39--50.
  World Sci. Publ., River Edge, NJ, 1994.

\bibitem[BS09]{BiswasSchumacher2009}
Indranil Biswas and Georg Schumacher.
\newblock Yang-{M}ills equation for stable {H}iggs sheaves.
\newblock {\em Internat. J. Math.}, 20(5):541--556, 2009.

\bibitem[CDY22]{cadorel2022hyperbolicity}
Benoit Cadorel, Ya~Deng, and Katsutoshi Yamanoi.
\newblock Hyperbolicity and fundamental groups of complex quasi-projective
  varieties.
\newblock {\em arXiv preprint arXiv:2212.12225}, 2022.

\bibitem[CN20]{ChenNgo2020}
Tsao-Hsien Chen and Bao~Ch\^{a}u Ng\^{o}.
\newblock On the {H}itchin morphism for higher-dimensional varieties.
\newblock {\em Duke Math. J.}, 169(10):1971--2004, 2020.

\bibitem[Cor88]{corlette1988flat}
Kevin Corlette.
\newblock Flat {$G$}-bundles with canonical metrics.
\newblock {\em J. Differential Geom.}, 28(3):361--382, 1988.

\bibitem[DM21]{daskalopoulos2021uniqueness}
Georgios Daskalopoulos and Chikako Mese.
\newblock Uniqueness of equivariant harmonic maps to symmetric spaces and
  buildings.
\newblock {\em arXiv preprint arXiv:2111.11422}, 2021.

\bibitem[DM23]{daskalopoulos2023notes}
Georgios Daskalopoulos and Chikako Mese.
\newblock Notes on harmonic maps.
\newblock {\em arXiv preprint arXiv:2301.04190}, 2023.

\bibitem[Don87]{donaldson1987twisted}
Simon~Kirwan Donaldson.
\newblock Twisted harmonic maps and the self-duality equations.
\newblock {\em Proc. London Math. Soc. (3)}, 55(1):127--131, 1987.

\bibitem[EG18]{EsnaultGroechenig2018}
H\'{e}l\`ene Esnault and Michael Groechenig.
\newblock Cohomologically rigid local systems and integrality.
\newblock {\em Selecta Math. (N.S.)}, 24(5):4279--4292, 2018.

\bibitem[Eys04]{Eyssidieux2004}
Philippe Eyssidieux.
\newblock Sur la convexit\'{e} holomorphe des rev\^{e}tements lin\'{e}aires
  r\'{e}ductifs d'une vari\'{e}t\'{e} projective alg\'{e}brique complexe.
\newblock {\em Invent. Math.}, 156(3):503--564, 2004.

\bibitem[GRR15]{garcia2015introduction}
Alberto Garc{\'\i}a-Raboso and Steven Rayan.
\newblock Introduction to nonabelian hodge theory: flat connections, higgs
  bundles and complex variations of hodge structure.
\newblock {\em Calabi-Yau Varieties: Arithmetic, Geometry and Physics: Lecture
  Notes on Concentrated Graduate Courses}, pages 131--171, 2015.

\bibitem[GS92]{GromovSchoen1992}
Mikhail Gromov and Richard~M. Schoen.
\newblock Harmonic maps into singular spaces and {$p$}-adic superrigidity for
  lattices in groups of rank one.
\newblock {\em Inst. Hautes \'{E}tudes Sci. Publ. Math.}, (76):165--246, 1992.

\bibitem[He20]{he2020behavior}
Siqi He.
\newblock The behavior of sequences of solutions to the {H}itchin-{S}impson
  equations.
\newblock {\em arXiv preprint arXiv:2002.08109}, 2020.

\bibitem[Hit87a]{hitchin1987self}
Nigel~J. Hitchin.
\newblock The self-duality equations on a {R}iemann surface.
\newblock {\em Proc. London Math. Soc. (3)}, 55(1):59--126, 1987.

\bibitem[Hit87b]{hitchin1987stable}
Nigel~J. Hitchin.
\newblock Stable bundles and integrable systems.
\newblock {\em Duke Math. J.}, 54(1):91--114, 1987.

\bibitem[Hit92]{hitchin1992lie}
Nigel~J. Hitchin.
\newblock Lie groups and {T}eichm\"{u}ller space.
\newblock {\em Topology}, 31(3):449--473, 1992.

\bibitem[HL23]{heliu2023spectralvariety}
Siqi He and Jie Liu.
\newblock On the spectral variety for rank two {H}iggs bundles.
\newblock {\em arXiv preprint arXiv:2310.18934}, 2023.

\bibitem[JZ97]{JostZuo1997}
J\"{u}rgen Jost and Kang Zuo.
\newblock Harmonic maps of infinite energy and rigidity results for
  representations of fundamental groups of quasiprojective varieties.
\newblock {\em J. Differential Geom.}, 47(3):469--503, 1997.

\bibitem[Kat97]{katzarkov1997shafarevich}
Ludmil Katzarkov.
\newblock On the {S}hafarevich maps.
\newblock In {\em Algebraic geometry---{S}anta {C}ruz 1995}, volume 62, Part 2
  of {\em Proc. Sympos. Pure Math.}, pages 173--216. Amer. Math. Soc.,
  Providence, RI, 1997.

\bibitem[Kli13]{Klingler2013}
Bruno Klingler.
\newblock Symmetric differentials, {K}\"{a}hler groups and ball quotients.
\newblock {\em Invent. Math.}, 192(2):257--286, 2013.

\bibitem[KS93]{KorevaarSchoen1993}
Nicholas~J. Korevaar and Richard~M. Schoen.
\newblock Sobolev spaces and harmonic maps for metric space targets.
\newblock {\em Comm. Anal. Geom.}, 1(3-4):561--659, 1993.

\bibitem[LZZ17]{LiZhangZhang2017}
Jiayu Li, Chuanjing Zhang, and Xi~Zhang.
\newblock Semi-stable {H}iggs sheaves and {B}ogomolov type inequality.
\newblock {\em Calc. Var. Partial Differential Equations}, 56(3):Paper No. 81,
  33, 2017.

\bibitem[Mar91]{Margulis1991}
Grigorii~Aleksandrovich Margulis.
\newblock {\em Discrete subgroups of semisimple {L}ie groups}, volume~17 of
  {\em Ergebnisse der Mathematik und ihrer Grenzgebiete (3)}.
\newblock Springer-Verlag, Berlin, 1991.

\bibitem[Mok89]{Mok1989}
Ngaiming Mok.
\newblock {\em Metric rigidity theorems on {H}ermitian locally symmetric
  manifolds}, volume~6 of {\em Series in Pure Mathematics}.
\newblock World Scientific Publishing Co., Inc., Teaneck, NJ, 1989.

\bibitem[Mok02]{Mok2002a}
Ngaiming Mok.
\newblock Characterization of certain holomorphic geodesic cycles on quotients
  of bounded symmetric domains in terms of tangent subspaces.
\newblock {\em Compositio Math.}, 132(3):289--309, 2002.

\bibitem[Mok04]{mok2004extremal}
Ngaiming Mok.
\newblock Extremal bounded holomorphic functions and an embedding theorem for
  arithmetic varieties of rank {$\geq 2$}.
\newblock {\em Invent. Math.}, 158(1):1--31, 2004.

\bibitem[OSS11]{OkonekSchneiderSpindler2011}
Christian Okonek, Michael Schneider, and Heinz Spindler.
\newblock {\em Vector bundles on complex projective spaces}.
\newblock Modern Birkh\"auser Classics. Birkh\"auser/Springer Basel AG, Basel,
  2011.
\newblock Corrected reprint of the 1988 edition, With an appendix by S. I.
  Gelfand.

\bibitem[Sch18]{schaposnik2018introduction}
L~Schaposnik.
\newblock An introduction to spectral data for higgs bundles.
\newblock {\em The geometry, topology and physics of moduli spaces of Higgs
  bundles}, pages 65--101, 2018.

\bibitem[Ser65]{Serre1965}
Jean-Pierre Serre.
\newblock {\em Alg{\`e}bre locale. {M}ultiplicit{\'e}s}, volume~11 of {\em
  Lecture Notes in Mathematics}.
\newblock Springer-Verlag, Berlin-New York, 1965.
\newblock Cours au Coll{\`e}ge de France, 1957--1958, r{\'e}dig{\'e} par Pierre
  Gabriel, Seconde {\'e}dition, 1965.

\bibitem[Sev59]{severi1959géométrie}
Francesco Severi.
\newblock La géométrie algébrique italienne. sa rigueur, ses méthodes, ses
  problèmes.
\newblock {\em Matematika}, 3(1):111--142, 1959.

\bibitem[Sim88]{Simpson1988Construction}
Carlos~T. Simpson.
\newblock Constructing variations of {H}odge structure using {Y}ang-{M}ills
  theory and applications to uniformization.
\newblock {\em J. Amer. Math. Soc.}, 1(4):867--918, 1988.

\bibitem[Sim91]{Simpson1991}
Carlos~T. Simpson.
\newblock The ubiquity of variations of {H}odge structure.
\newblock In {\em Complex geometry and {L}ie theory ({S}undance, {UT}, 1989)},
  volume~53 of {\em Proc. Sympos. Pure Math.}, pages 329--348. Amer. Math.
  Soc., Providence, RI, 1991.

\bibitem[Sim92]{Simpson1992}
Carlos~T. Simpson.
\newblock Higgs bundles and local systems.
\newblock {\em Inst. Hautes \'{E}tudes Sci. Publ. Math.}, (75):5--95, 1992.

\bibitem[Sim94a]{simpson1994moduli}
Carlos~T. Simpson.
\newblock Moduli of representations of the fundamental group of a smooth
  projective variety. {I}.
\newblock {\em Inst. Hautes \'{E}tudes Sci. Publ. Math.}, (79):47--129, 1994.

\bibitem[Sim94b]{simpson1994moduli2}
Carlos~T. Simpson.
\newblock Moduli of representations of the fundamental group of a smooth
  projective variety. {II}.
\newblock {\em Inst. Hautes \'{E}tudes Sci. Publ. Math.}, (80):5--79, 1994.

\bibitem[Sim97]{simpson1996hodge}
Carlos Simpson.
\newblock The {H}odge filtration on nonabelian cohomology.
\newblock In {\em Algebraic geometry---{S}anta {C}ruz 1995}, volume~62 of {\em
  Proc. Sympos. Pure Math.}, pages 217--281. Amer. Math. Soc., Providence, RI,
  1997.

\bibitem[SS24]{SongSun2024}
Lei Song and Hao Sun.
\newblock On the image of {H}itchin morphism for algebraic surfaces: the case
  $\textup{GL}_n$.
\newblock {\em Int. Math. Res. Not. IMRN}, (1):492--514, 2024.

\bibitem[Wen16]{Wentworth2016}
Richard~A. Wentworth.
\newblock Higgs bundles and local systems on {R}iemann surfaces.
\newblock In {\em Geometry and quantization of moduli spaces}, Adv. Courses
  Math. CRM Barcelona, pages 165--219. Birkh\"{a}user/Springer, Cham, 2016.

\bibitem[Zuo96]{zuo1996kodaira}
Kang Zuo.
\newblock Kodaira dimension and {C}hern hyperbolicity of the {S}hafarevich maps
  for representations of {$\pi_1$} of compact {K}\"{a}hler manifolds.
\newblock {\em J. Reine Angew. Math.}, 472:139--156, 1996.

\end{thebibliography}
\end{document}